\newif\ifaistats
\newif\ifarxiv
\newif\ifneurips
\newif\ificml
\newif\ifaistatsnot
\newif\ifarxivnot
\newif\ifneuripsnot
\newif\ificmlnot
\newif\ifextension
\newif\ifextensionnot
\newif\ifhidden
\newif\ifhiddennot
\newtheorem{theorem}{Theorem}
\newtheorem{proposition}{Proposition}
\newtheorem{lemma}{Lemma}
\newtheorem{assumption}{Assumption}
\newtheorem*{remark}{Remark}
\newtheorem{definition}{Definition}
\DeclareMathOperator*{\argmin}{argmin}
\newcommand{\R}{\mathbb{R}}
\newcommand{\rdtor}{\R^d \to \R}
\newcommand{\eqdef}{\stackrel{\text{def}}{=}}
\newcommand{\cO}{{\cal O}}
\newcommand{\cOb}[1]{\cO \lr #1 \rr}
\newcolumntype{?}{!{\vrule width 1pt}}
\definecolor{mydarkgreen}{RGB}{39,130,67}
\definecolor{mydarkred}{RGB}{192,47,25}
\newcommand{\norm}[1]{\left \| #1 \right\|}
\newcommand{\Edist}[2]{\mathbb{E}_{#1}\left[#2\right] } 
\newcommand{\fopt}{f^*}
\newcommand{\xopt}{x^*}
\newcommand{\algstyle}{\sf}
\newcommand{\gd}{{\algstyle GD}}
\newcommand{\sgd}{{\algstyle SGD}}
\newcommand{\adagrad}{{\algstyle AdaGrad}}
\newcommand{\spsmax}{{\algstyle SPSmax}}
\newcommand{\decsps}{{\algstyle DecSPS}}
\newcommand{\libsvm}{{\algstyle LIBSVM}}
\newcommand{\psps}{{\algstyle PSPS}}
\newcommand{\sania}{{\algstyle SANIA}}
\newcommand{\sls}{{\algstyle SLS}}
\newcommand{\kate}{{\algstyle KATE}}
\newcommand{\momo}{{\algstyle MoMo}}
\newcommand{\pof}[1]{Proof of #1.}
\newcommand{\g}{\nabla f}
\newcommand{\mO}{\mathbf 0}
\newcommand{\normsM}[2]{{\left \| #1 \right\|}^2_{#2}}
\newcommand{\sumin}[2]{ \sum \limits_{#1=1}^{#2}}
\newcommand{\avein}[2]{\frac 1 {#2} \sum \limits_{#1=1}^{#2}}
\newcommand{\avesn}[2]{\frac 1 {\vert #2 \vert} \sum \limits_{#1 \in #2}}
\newcommand{\prodin}[2]{ \prod \limits_{#1=1}^{#2}}
\newcommand{\norms}[1]{\norm{#1}^2}
\newcommand{\xerr}[1]{x^{#1} -\xopt}
\newcommand{\N}{\mathbb N}
\newcommand{\lr}{\left(} % round
\newcommand{\rr}{\right)}
\newcommand{\ls}{\left[} % square
\newcommand{\rs}{\right]}
\newcommand{\lc}{\left\lbrace} % curly
\newcommand{\rc}{\right\rbrace}
\newcommand{\la}{\left\langle} % angle
\newcommand{\ra}{\right\rangle}
\newcommand{\lv}{\left \vert}
\newcommand{\rv}{\right \vert}
\newcommand{\indep}{\perp \!\!\! \perp}
\newcommand{\footremember}[2]{%
    \footnote{#2}
    \newcounter{#1}
    \setcounter{#1}{\value{footnote}}%
}
\newcommand{\footrecall}[1]{%
    \footnotemark[\value{#1}]%
} 
    \author{\authors}
    \date{}
    \title{\mytitle}
\newcommand{\mytitle}{Polyak Stepsize: Estimating Optimal Functional Values Without Parameters or Prior Knowledge}
\newcommand{\mom}{\alpha}
\newcommand{\spc}{\gamma}
\newcommand{\ssize}{\eta}
\newcommand{\tp}{{\algstyle TP}}
\newcommand{\stp}{{\algstyle STP}}
\newcommand{\stpm}{{\algstyle STPm}}
\newcommand{\dog}{{\algstyle DoG}}
\newcommand{\tdog}{{\algstyle T-DoG}}
\newcommand{\dowg}{{\algstyle DoWG}}
\newcommand{\alig}{{\algstyle ALI-G}}
\newcommand{\xydiff}{{\color{blue} a}}
\newcommand{\yodiff}{{ \color{blue} b}}
\newcommand{\batch}{B}
\newcommand{\fbi}[1]{f_{\batch_{#1}}}
\newcommand{\gbi}[1]{\g_{\batch_{#1}}}
\title{\mytitle}
\author{
\and Farshed Abdukhakimov \\ \texttt{MBZUAI}\footremember{email}{Mohamed bin Zayed University of Artificial Intelligence. Email: \{first name\}.\{last name\}@mbzuai.ac.ae}
\and Cuong Anh Pham \\ \texttt{MBZUAI}\footrecall{email}
\and Samuel Horv\'ath \\ \texttt{MBZUAI}\footrecall{email}
\and Martin Tak\'a\v{c} \\ \texttt{MBZUAI}\footrecall{email}
\and Slavom\'ir Hanzely \\ \texttt{MBZUAI}\footrecall{email}
}
\begin{document}

\maketitle

\begin{abstract}
  The Polyak stepsize for Gradient Descent is known for its fast convergence but requires prior knowledge of the optimal functional value, which is often unavailable in practice.
    In this paper, we propose a parameter-free approach that estimates this unknown value during the algorithm’s execution, enabling a parameter-free stepsize schedule. Our method maintains two sequences of iterates: one with a higher functional value is updated using the Polyak stepsize, and the other one with a lower functional value is used as an estimate of the optimal functional value.
    We provide a theoretical analysis of the approach and validate its performance through numerical experiments. The results demonstrate that our method achieves competitive performance without relying on prior function-dependent information.
\end{abstract}

\section{Introduction }

In this paper, we consider the unconstrained minimization problem of a function $f: \R^d \to \R$,
\begin{equation} \label{eq:loss}
    \min_{x \in \R^d} f(x),
\end{equation}
where dimensionality $d$ is potentially large. We assume that $f$ is ill-conditioned, making hyperparameter tuning particularly challenging. This is a common issue in machine learning when datasets are not properly preprocessed.
A standard approach to solving \eqref{eq:loss} is Gradient Descent (\gd{}), a widely used optimization algorithm due to its simplicity and effectiveness across various domains, including machine learning, statistical modeling, and engineering:
\begin{align} \label{eq:gd}
    x^{k+1} = x^k - \ssize \g(x^k). \tag{GD}
\end{align}

Despite its popularity, traditional Gradient descent (\gd{}) faces significant challenges in selecting an appropriate stepsize, as the fine-tuning process can be both time-consuming and computationally expensive. The stepsize directly affects the convergence speed and stability of the optimization algorithm: a value that is too large leads to divergence, while one that is too small results in slow convergence, limiting the algorithm’s practical utility (see \Cref{fig:sensitivity_sgd}).
For example, consider a simple quadratic function $\frac 2L \norms x$, where the optimal stepsize is given by $  \ssize^*_{\gd{}} \eqdef \frac L2$. Choosing a smaller stepsize ($\ssize<\ssize^*_{\gd{}}$) slows down \gd{} linearly, whereas a larger stepsize $\ssize>2 \ssize^*_{\gd{}}$ causes divergence.
Stepsize selection is an important aspect of the training process, even for large models, \citet{schaipp2023momo} highlights the sensitivity of the learning rate in neural networks.
% while \citet{hanzely2024newton} demonstrates that, in higher-order optimization, adjusting the learning rate of the Newton method serves as a powerful mechanism for interpolating between second-order and third-order tensor methods.
% Hyperparameter tuning is a critical challenge in practice. 

\begin{figure*}
    \centering
    \begin{subfigure}{0.28\linewidth}
        \centering
        \includegraphics[width=\linewidth]{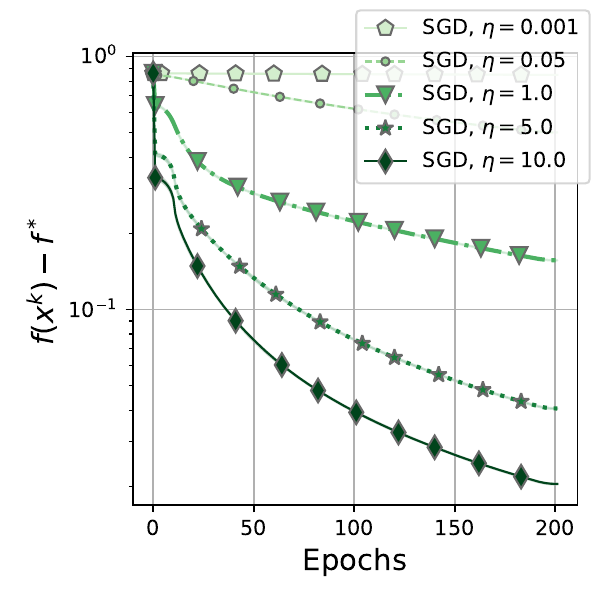}
        \caption{Logistic regression \eqref{eq:logistic} on \textbf{colon-cancer}.}
        \label{fig:sensitivity_sgd}
    \end{subfigure}
    \hfill
    \begin{subfigure}{0.28\linewidth}
        \centering
        \includegraphics[width=\linewidth]{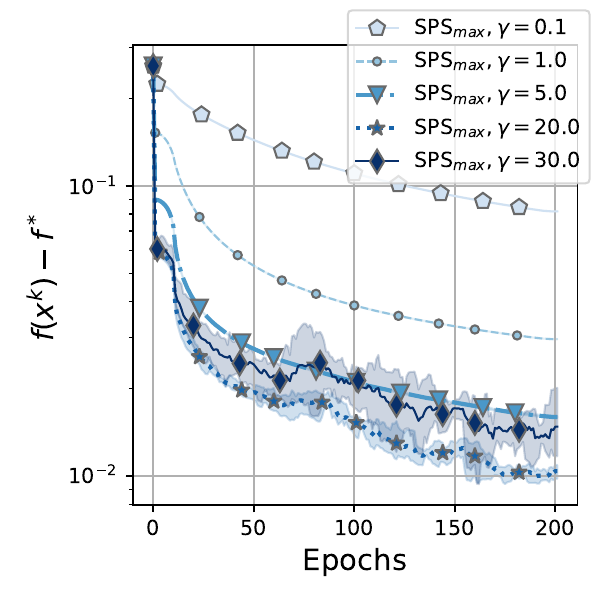}
        \caption{Logistic regression \eqref{eq:logistic} on \textbf{a1a}.}
        \label{fig:sensitivity_sps}
    \end{subfigure}
    \hfill
    \begin{subfigure}{0.28\linewidth}
        \centering
        \includegraphics[width=\linewidth]{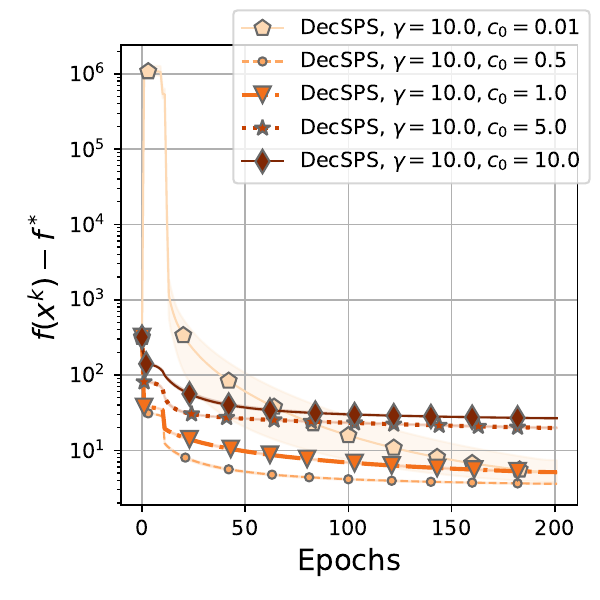}
        \caption{Least squares \eqref{eq:lstsq} on \textbf{housing}.}
        \label{fig:sensitivity_decsps}
    \end{subfigure}
    \caption{Sensitivity of stochastic methods \sgd{}, \spsmax{}, and \decsps{} to their respective hyperparameters on \libsvm{} datasets \citep{Chang2011LIBSVM}. We repeat the experiments for 5 different random seeds and plot mean $\pm$ standard deviation. For easier readability, we reduce variance by plotting running average of functional subpotimality.}
    \label{fig:sensitivity}
    \vspace{-10pt}
\end{figure*}

% \begin{figure*}
%     \centering
%     \begin{subfigure}{\textwidth}
%       \centering
%         \includegraphics[width=\textwidth]{plots/rand_a1a_deterministic_logreg.pdf}
%       \caption{\textbf{PLACEHOLDER FIGURES HERE.} Sensitivity of \spsmax{} to TODO on TODO fuctions} \label{fig:sensitivity_gd}
%     \end{subfigure}
%     \begin{subfigure}{\textwidth}
%       \centering
%         \includegraphics[width=\textwidth]{plots/rand_a1a_deterministic_logreg.pdf}
%       \caption{\textbf{PLACEHOLDER FIGURES HERE.} Sensitivity of \decsps{} to TODO on TODO fuctions} \label{fig:sensitivity_sps}
%     \end{subfigure}
%     % \begin{subfigure}{\textwidth}
%     %   \centering
%     %     \includegraphics[width=\textwidth]{plots/rand_a1a_deterministic_logreg.pdf}
%     %   \caption{\textbf{PLACEHOLDER FIGURES HERE.} Sensitivity of \decsps{} to stepsize on TODO fuctions} \label{fig:sensitivity_decsps}
%     % \end{subfigure}
%         % \includegraphics[width=\textwidth]{plots/rand_a1a_deterministic_logreg.pdf}
%     \caption{\textbf{PLACEHOLDER FIGURES HERE.} Sensitivity of methods from literature: \gd{}, \spsmax{}, \decsps{}.} \label{fig:sensitivity}
% \end{figure*}

% \citet{orvieto2024adaptive} presents a \gd{} stepsize based on the quadratic approximation of square root of the function.

% \subsection{Impact on practical performance}
Due to its practical significance, the development of methods that automatically adapt stepsizes to a problem's geometry remains an active research topic up to this day. From the most recent development,
% For a basic Stochastic Gradient Descent (\sgd{}),
\citet{carmon22making} proposed a bisection procedure to make Stochastic Gradient Descent (\sgd{}) parameter-free via a backtracking procedure.
\citet{ivgi2023dog} introduced a parameter-free \sgd{} stepsize schedule called \dog{}, which sets the stepsize using empirically observed quantities -- as the ratio between the maximum observed distance from the initial point and an accumulation of gradient norms similar to \adagrad{} \citep{duchi2011adaptive, ward2020adagrad}.
% \begin{equation} \label{eq:dog} \tag{DoG}
%     \ssize_t = \frac{\max_{i\leq t} \norm{x^i-x^0}}{\sqrt{\sumin it \norms{\g_i(x^i)}}}.
% \end{equation}
Notably, \dog{} can diverge \citep{ivgi2023dog}, and to address this, the authors propose a modified variant, \tdog{}, which ensures convergence but is not entirely parameter-free.
\citet{khaled2023dowg} introduce a weighted variant, \dowg{}, which improves test accuracy in neural networks. While these methods are parameter-free and do not require prior knowledge of the function, they remain sensitive to the problem's scaling and, therefore, still depend on dataset preprocessing.
\citet{vaswani2019painless} propose using line-search methods, such as Armijo line-search, to automatically determine the stepsize for SGD in settings that satisfy the interpolation condition. Their method, \sls{}, achieves faster convergence and improved generalization when training deep neural networks for classification tasks.
AI-SARAH, proposed by \citet{shi2023ai}, extends the stochastic recursive gradient algorithm SARAH \cite{nguyen2017sarah,nguyen2021inexact,beznosikov2021random} by choosing a step size that minimizes the stochastic gradient magnitude of the variance reduced method.

% \citet{} presents an algorithm with high-probability guarantees in the heavy-tail and sub-exponential noises.

\subsection{Polyak stepsize}
Apart from the most recent development, there has been a rich history of theoretically-based stepsize adaptations aiming to remove the need for hyperparameter tuning.
As one of the most prominent works, \citet{polyak1969minimization} analyzed quadratic functions and proposed an optimal stepsize for \gd{} that does not depend on the smoothness constant,
\begin{equation} \label{eq:polyak}
\ssize_k = \frac {f(x^k)-\fopt}{\norms {\g(x^k)}}. \tag{Polyak}
\end{equation}
The Polyak stepsize, which adapts based on the objective function value, offers a promising approach to automating stepsize selection. However, the Polyak stepsize requires prior knowledge of the optimal function value, $\fopt$, limiting its applicability to cases where this value is known. Follow-up works proposed techniques to remove this limitation while preserving asymptotic convergence to the solution or its neighborhood
\citep{goffin1998convergence, sherali2000variable, nedic2001incremental}.
Recent work \citet{hazan2019revisiting} proposed a method for estimating $\fopt$ with non-asymptotic convergence. Their approach starts with a lower bound on $\fopt$ and runs the optimization for multiple epochs. If the method fails to converge to a required precision, it restarts with an improved lower bound. While theoretically sound, this approach requires a complex criterion to determine whether the algorithm has converged or if another restart is necessary, which, in turn, demands prior knowledge of the function. 
For the empirical risk minimization problem, \citet{loizou2021stochastic} proposed an enhancing Polyak stepsize with the clipping \footnote{For a function $f$ with an empirical risk structure $f(x)=\avein in f_i(x)$, Stochastic Gradient Descent (\sgd{}) updates iterates as $x^{k+1}=x^k-\ssize \g_i(x^k)$ for a randomly sampled function $f_i$.}. After sampling a function $f_i$, their method \spsmax{} uses stepsize
\begin{equation} \label{eq:spsmax}
\ssize_k = \min \lc \frac {f_i(x^k)-\fopt_i}{c\norms {\g_i(x^k)}}, \spc \rc, \tag{SPSmax}
\end{equation}
where $\fopt_i$ denotes the minimum of function $f_i$, $\fopt_i \eqdef \min_{x \in \R^d} f_i(x)$. Notably, \spsmax{} improves practical performance for preprocessed datasets. However, it still relies on knowledge of the optimal functional values $\fopt_i$ and requires the selection of an additional parameter $\spc$ -- in \Cref{fig:sensitivity_sps}, we illustrate that \spsmax{} is sensitive to the introduced parameter $\spc$.
\cite{orvieto2022dynamics} relaxed the requirement for exact optimal functional values $\fopt_i$ by replacing it with lower bounds $l_i^*\leq \fopt_i$ and selecting stepsize recursively as
\begin{equation} \label{eq:decsps}
\ssize_k= \frac 1 {c_k}\min \lc \frac {f_i(x^k)-l^*_i}{\norms {\g_i(x_k)}}, c_{k-1} \ssize_{k-1} \rc. \tag{DecSPS}
\end{equation}
However, \decsps{} remains sensitive to the choice of parameters $c_k$. \citet{orvieto2022dynamics} recommended setting these hyperparameters as either $c_k=c_0 (k+1)$ or $c_k=c_0 \sqrt{k+1}.$ Nonetheless, as shown in \Cref{fig:sensitivity_decsps}, the initial choice of $c_0$ significantly impacts the convergence behavior. 

Furthermore, \citet{barre2020complexity} enhanced the Polyak stepsize by incorporating momentum, while \citet{takezawa2024parameter} established a connection between Polyak stepsizes and gradient clipping. In the interpolation regime, \citet{berrada2020training} proposed increasing the denominator of \spsmax{} by a constant, leading to the \alig{} method, and \citet{gower2022cuttings} interpreted the family of \spsmax{}-like methods as passive-aggressive methods. \citet{gower2025analysis} analyze an idealized method {\algstyle SPS*} that utilizes stochastic gradients in the solution and provides convergence rates and show its optimality for the Lipschitz functions.

\subsection{Requirement of parameter tuning}

Despite all of the interest of the optimization community, all of the aforementioned methods require hyperparameters tuning, and no single choice of these hyperparameters performs well across all functions. This is a fundamental limitation -- as the optimal hyperparameters depend on the properties of the function, selecting them indirectly involves incorporating the prior knowledge into the optimization algorithm.

For instance, the parameter $\spc$ in \spsmax{} is sensitive to dataset scaling -- even a simple loss transformation $f \to c \cdot f$ directly affects its optimal value. 
Similarly, the lower bound estimates $l^*_i$ are sensitive to dataset translation, even under a simple loss transformation $f \to c+ f$.\\
% We illustrate this sensitivity in \Cref{fig:sensitivity}.

% \subsection{Our approach}
We propose a novel algorithm that leverages the Polyak stepsize schedule for automated and efficient stepsize selection. Our method is entirely parameter-free, scaling-invariant, and translation-invariant, and adapts dynamically to optimization landscapes without requiring any hyperparameter tuning.
By eliminating the need for preset parameters, our approach simplifies implementation and reduces the computational overhead associated with hyperparameter selection.

\subsection{Contributions}
Our contributions can be summarized as follows:
\begin{itemize}[leftmargin=*]
    \item \textbf{Drawbacks of parameter-dependent methods:}
    We analyze the limitations of existing approaches that require hyperparameter tuning or prior assumptions about the loss function.
    
    \item \textbf{Novel parameter-free algorithm:} 
    We introduce a novel algorithm \tp{}, a truly parameter-free variant of the Polyak stepsize (\Cref{alg:tp}). Unlike previous methods, our approach does not require any prior knowledge of the function.

    Our algorithm estimates the optimal functional value using an auxiliary sequence of iterates. To the best of our knowledge, this is the first work using this technique.

    \item \textbf{Convergence guarantees:} 
    We provide convergence guarantees under different sets of assumptions:
    \begin{itemize}
    \item Smoothness (\Cref{def:smoothness}) and strong convexity (\Cref{def:convexity}) in \Cref{th:strongly_convex},
    \item Bounded gradient assumption (\Cref{def:gbound}) and convexity (\Cref{def:convexity}) in \Cref{le:convex_gbound}.
    \end{itemize}

    \item \textbf{Stochastic loss:} 
    To align with the requirements of machine learning, we extend our method \tp{} to settings where the loss function is stochastic. 
    % We propose two variations:
        Our adaptation naturally replaces gradients with stochastic gradients (\Cref{alg:stp}).
    % \begin{itemize}
        % \item 
        % \item A variant that incorporates a moving average of stochastic gradients (\Cref{alg:stpm}), similar to \adam{}. While this modification introduces a momentum parameter, the method remains invariant to scaling and translation for any choice of this parameter.
    % \end{itemize}

    \item \textbf{Experimental comparison:}
    We evaluate the performance of our algorithm on various datasets and demonstrate that, despite being parameter-free, it achieves convergence comparable to methods that rely on parameter tuning. All our results are fully reproducible using the provided source code.
\end{itemize}

Motivated by similar objectives, we first review approaches that aim to automatically adapt to local geometry. Then, we introduce our algorithm along with its theoretical foundation. Next, we present experimental results demonstrating its practical performance. Finally, we conclude with a discussion of key findings and potential directions for future work.

\subsection{Adaptive methods}

Aiming to adapt to a local geometry \citet{duchi2011adaptive} proposed a family of subgradient methods including \adagrad{}. 
\citet{mishchenko2023prodigy} proposed a method of estimating the distance of the current iterate to the solution, which is used as a hyperparameter in various algorithms.
\citet{abdukhakimov2024stochastic} presented \psps{}, a parameter-free extension of {\algstyle{SPS}} that employs preconditioning techniques to handle poorly scaled and/or ill-conditioned datasets.
\citet{abdukhakimov2023sania} introduced \sania{}, a general optimization framework that unifies and extends well-known first-order, second-order, and quasi-Newton methods from the Polyak stepsize perspective, making them adaptive and parameter-free within the {\algstyle{SPS}} setting.
\citet{choudhury2024remove} proposed \kate{}, an adaptive and scale-invariant variant of \adagrad{}. \kate{} eliminates the square root in the denominator of the stepsize and introduces a new sequence in the numerator to compensate for its removal.
\citet{li2022sp2} proposed {\algstyle{SP2}}, a second-order Polyak step-size method that leverages second-order information to accelerate convergence.

\section{Algorithm}
\label{subsec:novel-algorithm}
After reviewing the relevant literature, we now introduce a novel algorithm inspired by the Polyak stepsize. Our method estimates the optimal functional value $\fopt$ using a secondary sequence of iterates, enabling automated stepsize selection without requiring any prior knowledge.
\begin{align}
    (x^{k+1}, y^{k+1}) \eqdef \nonumber
    \begin{cases}
    \lr x^k- 2\frac {f(x^k)-f(y^k)}{\norms {\g(x^k)}} \g(x^k), y^k \rr, & \text{if }f(x^{k})> f(y^{k})\\
    \lr x^k, y^k- 2\frac {f(y^k)-f(x^k)}{\norms {\g(y^k)}} \g(y^k) \rr, & \text{if }f(y^{k})> f(x^{k})
    \end{cases}. \tag{\tp{}}
\end{align}
We call this algorithm \emph{Polyak method with twin iterate sequences} (or Twin Polyak, \tp{}), as detailed in \Cref{alg:tp}.
By maintaining two sequences of iterates, \tp{} updates the sequence with the higher functional value using an estimate of the optimal functional value provided by the other sequence.

To the best of our knowledge, \tp{} is the first variant of the Polyak method to adopt this technique. 
In contrast to existing Polyak method variants that estimate the optimal functional value $\fopt$ using lower bounds \citep{hazan2019revisiting, loizou2021stochastic, orvieto2022dynamics}, \tp{} uses the second sequence to estimate the optimal functional value from above. \\
\begin{algorithm} 
    \caption{\tp{}: Polyak method with twin iterate sequences} \label{alg:tp}
    \begin{algorithmic}[1]
        \State \textbf{Requires:} Initial points $x^0, y^0 \in \R^d,$ constant $\varepsilon>0$.
        \For {$k=0,1,2\dots$}
            \State Compute $f(x^k)$ and $f(y^k)$
            \If {$\vert f(x^{k}) - f(y^{k}) \vert < \varepsilon$}
                \State Return $x^k$
            \ElsIf {$f(x^{k})>f(y^{k})$}
                % \State Compute $\g(x)$
                \State $x^{k+1} = x^k- 2\frac {f(x^k)-f(y^k)}{\norms {\g(x^k)}} \g(x^k)$
                \Comment{Step for $x$}
                \State $y^{k+1} = y^k$
            \Else%{$f(x^{k})<f(y^{k})$}
                % \State Compute $\g(y)$
                \State $x^{k+1} = x^k$
                \State $y^{k+1} =  y^k- 2\frac {f(y^k)-f(x^k)}{\norms {\g(y^k)}} \g(y^k)$
                \Comment{Step for $y$}
            \EndIf
        \EndFor
    \end{algorithmic}
\end{algorithm}

\subsection*{Properties of \tp{}}
% \label{subsec:tp-properties}
Observe that the \tp{} algorithm is less dependent on dataset preprocessing, as it remains invariant to both scaling and translation of the dataset (i.e., the sequences of iterates are unchanged under transformations $f \to c \cdot f$ and $c+f$).

For quadratic functions, \tp{} alternates updates between the sequences $x^k$ and $y^k$ and converges linearly to the solution. This property is formalized in the following lemma.

\begin{lemma} \label{le:quadratic}
    Consider quadratic function $f(x) \eqdef \frac 12 \normsM x 2 + b$ and updates of \tp{} (\Cref{alg:tp}) with initialization points $x^0, y^0$ such that $f(x^0)\not = f(y^0)$.
    Without loss of generality, consider $f(x^{k})> f(y^{k})$. Then it holds
    \begin{align}
    \frac {\normsM {x^{k+1}} 2 } {\normsM {y^{k+1}} 2} 
    = \frac {\normsM {y^k} 2 } {\normsM {x^k} 2}.
    \end{align}
    Therefore, models change their relative position each step, $f(x^{k+1})< f(y^{k}),$ and \tp{} converges linearly to the solution $x^*=\mO$ with the factor $ {\normsM {y^0} 2 } /  {\normsM {x^0} 2}$.
\end{lemma}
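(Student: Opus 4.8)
The plan is to use the explicit gradient of the quadratic. Since $f(x)=\frac 12\normsM x 2 + b$ has $\g(x)=x$, we have $\normsM{\g(x^k)}{2}=\normsM{x^k}{2}$ and $f(x^k)-f(y^k)=\frac 12\lr\normsM{x^k}{2}-\normsM{y^k}{2}\rr$. Under the assumption $f(x^k)>f(y^k)$ (which for this $f$ is exactly $\normsM{x^k}{2}>\normsM{y^k}{2}$) the $\tp$ update for $x$ collapses once the $2$ cancels the $\frac 12$: I would first record the one-line computation
\begin{equation*}
x^{k+1}=x^k-\frac{\normsM{x^k}{2}-\normsM{y^k}{2}}{\normsM{x^k}{2}}\,x^k=\frac{\normsM{y^k}{2}}{\normsM{x^k}{2}}\,x^k,\qquad y^{k+1}=y^k .
\end{equation*}
Everything else is a consequence of this scaling.

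Next I would take squared norms. From the display, $\normsM{x^{k+1}}{2}=\lr\normsM{y^k}{2}\rr^{2}/\normsM{x^k}{2}$ while $\normsM{y^{k+1}}{2}=\normsM{y^k}{2}$; dividing gives the claimed identity $\normsM{x^{k+1}}{2}/\normsM{y^{k+1}}{2}=\normsM{y^k}{2}/\normsM{x^k}{2}$ at once. The same formula yields the swap of relative positions: writing $\normsM{x^{k+1}}{2}=\normsM{y^k}{2}\cdot\lr\normsM{y^k}{2}/\normsM{x^k}{2}\rr$ and noting the parenthesised ratio is $<1$ under $\normsM{x^k}{2}>\normsM{y^k}{2}$, we get $\normsM{x^{k+1}}{2}<\normsM{y^k}{2}=\normsM{y^{k+1}}{2}$, i.e. $f(x^{k+1})<f(y^{k+1})$. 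Hence the two sequences interchange their order every iteration, and by the symmetry of the update the identical formulas hold at the next step with the roles of $x$ and $y$ swapped.

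For the linear rate I would package the alternation into a single invariant. Set $\rho\eqdef\normsM{y^0}{2}/\normsM{x^0}{2}<1$ and let $M_k\eqdef\max\lc\normsM{x^k}{2},\normsM{y^k}{2}\rc$. The identity shows the ratio $(\text{smaller})/(\text{larger})$ is preserved across every step and therefore always equals $\rho$; combined with the swap -- the just-updated sequence drops strictly below the frozen one -- the new maximum is exactly the old minimum $=\rho M_k$, so $M_{k+1}=\rho M_k$ and $M_k=\rho^{k}\normsM{x^0}{2}$. Since $f(z)-\fopt=\frac 12\normsM z 2$ for this quadratic (with $\xopt=\mO$, $\fopt=b$), both $f(x^k)-\fopt$ and $f(y^k)-\fopt$ are bounded by $\frac 12\rho^{k}\normsM{x^0}{2}\to 0$, which is linear convergence with factor $\rho=\normsM{y^0}{2}/\normsM{x^0}{2}$.

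I expect the only genuine obstacle to be the bookkeeping in this last paragraph: one has to argue cleanly that the $(\text{smaller})/(\text{larger})$ ratio is invariant and that the frozen sequence becomes the new maximum at every step, since a careless single-step induction is complicated by the alternating roles. Isolating the contraction $M_{k+1}=\rho M_k$ for the running maximum -- rather than tracking $\normsM{x^k}{2}$ and $\normsM{y^k}{2}$ separately -- is what makes the rate transparent.
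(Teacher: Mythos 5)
Your proof is correct and takes essentially the same route as the paper's: both hinge on the one-line collapse of the update to $x^{k+1} = \frac{\normsM{y^k}{2}}{\normsM{x^k}{2}}\, x^k$, from which the ratio identity and the swap $f(x^{k+1}) < f(y^{k})$ follow immediately. If anything, your write-up is more complete than the paper's own proof, which stops after verifying the swap and leaves the ratio identity and the linear-rate bookkeeping (your invariant $M_{k+1} = \rho M_k$) implicit.
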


\section{Theory}
\label{sec:theory}
Before presenting our theoretical guarantees, we first review standard results for Polyak stepsizes.\\ 

% \subsection{Standard guarantees for Polyak stepsizes}

We will establish convergence results under different sets of standard assumptions, so we need to define the notions of convexity, strong convexity, Lipschitz smoothness, and $G$-bounded gradients.

Strong convexity provides a quadratic lower bound on the functional values, while convexity provides a weaker linear lower bound.
\begin{definition} \label{def:convexity}
    A differentiable function $f: \R^d \to \R$ is called \emph{$\mu$-strongly convex} if there exist a constant $\mu\geq 0$ so that for all $ x,y \in \R^d$ holds
    \begin{equation} \label{eq:strongly_convex}
        f(y)\geq f(x)+\la \g(x), y-x \ra +\frac \mu 2 \norms{y-x}.
    \end{equation}
    If \eqref{eq:strongly_convex} holds with $\mu=0$, we say that function $f$ is \emph{convex}.
\end{definition}

Lipschitz smoothness provides a quadratic upper bound on the functional values.
\begin{definition} \label{def:smoothness}
    A differentiable function $f: \R^d \to \R$ is called \emph{$L$-smooth} if there exist a constant $L\geq 0$ so that for all $ x,y \in \R^d$ holds
    \begin{equation}
        f(y)\leq f(x)+\la \g(x), y-x \ra +\frac L 2 \norms{y-x}.
    \end{equation}
\end{definition}

The assumption of $G$-bounded gradients constrains the gradient magnitudes, effectively limiting the size of the updates.
\begin{definition} \label{def:gbound}
    A differentiable function $f: \R^d \to \R$ is called \emph{$G$-bounded} in the set $S$, if the supremum of the gradient norms within $S$ is finite, 
    \begin{equation}
        G \eqdef \sup_{x \in S} \norm{\g(x)} <\infty.
    \end{equation}
\end{definition}
However, a uniform gradient bound does not always exist. For instance, quadratic functions can exhibit arbitrarily large gradients when initialized sufficiently far from the solution. In fact, this assumption is mutually exclusive with strong convexity. Consequently, determining the constant $G$ for $G$-bounded gradients is often nontrivial.

% \begin{figure*}
%     \centering
%     \begin{subfigure}{0.49\linewidth}
%         \centering
%         \includegraphics[width=\linewidth]{plots/real-sim_deterministic_logreg_2.pdf}
%         \caption{\textbf{Logistic regression} loss function \eqref{eq:logistic}.}
%         % \label{fig:real-sim-det}
%     \end{subfigure}
%     \hfill
%     \begin{subfigure}{0.49\linewidth}
%         \centering
%         \includegraphics[width=\linewidth]{plots/abalone_scale_deterministic_lstsq_2.pdf}
%         \caption{\textbf{Least squares} loss function \eqref{eq:lstsq}.}
%         % \label{fig:abalone-scale-det}
%     \end{subfigure}
%     \caption{Comparison of \tp{} and other methods when utilizing full gradients ($n=1$). The methods are deterministic, we plot mean $\pm$ standard deviation over 5 random reinitializations.}
% \end{figure*}
The convergence of the Polyak method is well established. In the bounded gradient case it achieves the rate $\mathcal O \lr k^{-\frac 12} \rr$ and the case of strongly convex functions it achieves linear convergence.

\begin{proposition} \label{pr:polyak_rate}
    Gradient descent with Polyak stepsizes \eqref{eq:polyak}, under the convexity assumption and that the gradients are $G$-bounded converges with the rate that depends on $k$ as $\cOb {k^{-\frac 12}}$.
\end{proposition}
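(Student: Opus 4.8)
The plan is to track the squared distance to a minimizer, $\norms{\xerr k}$, and show it decreases by an amount controlled by the functional suboptimality. First I would expand a single step of \gd{} with stepsize $\ssize_k$:
\[
\norms{\xerr{k+1}} = \norms{\xerr k} - 2 \ssize_k \la \g(x^k), \xerr k \ra + \ssize_k^2 \norms{\g(x^k)}.
\]
Convexity (\Cref{def:convexity} with $\mu = 0$) then lower-bounds the inner product by the suboptimality gap, $\la \g(x^k), \xerr k \ra \geq f(x^k) - \fopt$, so that $\ssize_k$ appears multiplied by a nonnegative quantity in the cross term.

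The key step is to substitute the Polyak stepsize $\ssize_k = (f(x^k)-\fopt)/\norms{\g(x^k)}$, which is engineered precisely so that the negative cross term outweighs the positive quadratic term. Both surviving terms collapse to $(f(x^k)-\fopt)^2/\norms{\g(x^k)}$, yielding the one-step inequality
\[
\norms{\xerr{k+1}} \leq \norms{\xerr k} - \frac{\lr f(x^k) - \fopt \rr^2}{\norms{\g(x^k)}}.
\]
I would then invoke the $G$-bounded gradient assumption (\Cref{def:gbound}), i.e.\ $\norms{\g(x^k)} \leq G^2$, to replace the gradient-norm weight from below. This weakens the decrement to $(f(x^k)-\fopt)^2/G^2$, a term no longer depending on the unknown gradient magnitude.

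Finally I would telescope over $k = 0, \dots, K-1$ and drop the nonnegative $\norms{\xerr K}$, obtaining $\sum_{k=0}^{K-1} (f(x^k)-\fopt)^2 \leq G^2 \norms{\xerr 0}$. Since the Polyak iterates need not decrease the function value monotonically, I would bound not the final iterate but the running best one: from $K \cdot \min_{k<K} (f(x^k)-\fopt)^2 \leq \sum_{k=0}^{K-1} (f(x^k)-\fopt)^2$ I get $\min_{k<K} (f(x^k)-\fopt) \leq G \norm{\xerr 0}/\sqrt{K}$, which is exactly the claimed $\cOb{k^{-1/2}}$ rate. The main obstacle here is not analytical depth—each manipulation is elementary—but rather bookkeeping the correct convergence measure: because the function values oscillate, the guarantee is for the minimum over iterates rather than the last one, and the $G$-bound is what converts the gradient-norm-weighted decrement into a uniform one (consistent with the earlier remark that this assumption is mutually exclusive with strong convexity).
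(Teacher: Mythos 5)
Your proof is correct and follows essentially the same route the paper uses: the paper's proof of the analogous result for Twin-Polyak (Lemma~\ref{le:convex_gbound}) expands $\norms{x^{k+1}-\xopt}$, applies convexity to the cross term, substitutes the stepsize so the decrement becomes $\lr f(x^k)-\fopt\rr^2/\norms{\g(x^k)}$, invokes the $G$-bound, telescopes, and passes to the minimum over iterates---exactly your steps, with $f(y^k)$ playing the role of your exact $\fopt$. Your bookkeeping observation (the guarantee is for $\min_{t}$, not the last iterate) also matches the paper's statement of the rate.
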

\begin{proposition}\label{pr:strongly_convex}
    If the function $f:\R^d \to \R$ is $\mu$-strongly convex (\Cref{def:convexity}) is minimized with Gradient descent with stepsizes chosen such that $0\leq \ssize_t \leq 2\frac {f(x^t)-\fopt}{\norms{\g(x^t)}}$ for all $t\leq k$.
    Then, after $k$ updates, the sequence $\{x^t\}_{t=0}^{k}$ has guaranteed decrease
    \begin{align}
        \norms{x^{k}-\xopt} 
        \leq \lr \prodin tk \lr 1-\mu \ssize_t \rr\rr \norms{x^0-\xopt}.
    \end{align}
\end{proposition}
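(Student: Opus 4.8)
The plan is to track the squared distance to the optimum $\norms{x^t - \xopt}$ across a single step and show it contracts by the factor $\lr 1 - \mu\ssize_t \rr$; the full bound then follows by unrolling. Since each iteration performs one ordinary gradient step $x^{t+1} = x^t - \ssize_t \g(x^t)$, I would begin by expanding
\[
\norms{x^{t+1} - \xopt} = \norms{x^t - \xopt} - 2\ssize_t \la \g(x^t), x^t - \xopt \ra + \ssize_t^2 \norms{\g(x^t)}.
\]
The entire argument reduces to controlling the cross term and the quadratic term on the right-hand side.

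For the cross term I would invoke $\mu$-strong convexity (\Cref{def:convexity}) with $x = x^t$ and $y = \xopt$, which after rearrangement gives the lower bound $\la \g(x^t), x^t - \xopt \ra \geq f(x^t) - \fopt + \frac{\mu}{2}\norms{x^t - \xopt}$. Substituting this (and using $\ssize_t \geq 0$) produces simultaneously the desired contraction contribution $-\ssize_t\mu\norms{x^t - \xopt}$ and a leftover functional-suboptimality term $-2\ssize_t\lr f(x^t) - \fopt \rr$.

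The heart of the proof — the step I expect to carry all the weight — is cancelling this leftover against the quadratic term $\ssize_t^2\norms{\g(x^t)}$. Here I would use the stepsize upper bound $\ssize_t \leq 2\frac{f(x^t) - \fopt}{\norms{\g(x^t)}}$, equivalently $\ssize_t\norms{\g(x^t)} \leq 2\lr f(x^t) - \fopt \rr$; multiplying by $\ssize_t \geq 0$ gives $\ssize_t^2\norms{\g(x^t)} \leq 2\ssize_t\lr f(x^t) - \fopt \rr$, so the two terms sum to at most zero. This is precisely where the factor $2$ in the admissible stepsize range is essential: with any strictly smaller multiple the cancellation would leave a positive residual and the clean contraction would fail. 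What survives is the one-step inequality $\norms{x^{t+1} - \xopt} \leq \lr 1 - \mu\ssize_t \rr\norms{x^t - \xopt}$.

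Finally I would unroll this recursion over the $k$ steps to reach the claimed bound $\norms{x^k - \xopt} \leq \lr \prodin tk \lr 1 - \mu\ssize_t \rr \rr\norms{x^0 - \xopt}$. As a sanity check confirming the factors are genuine contractions, strong convexity alone yields the gradient-domination inequality $f(x^t) - \fopt \leq \frac{1}{2\mu}\norms{\g(x^t)}$ (obtained by minimizing the strong-convexity lower bound over $y$), whence $\ssize_t \leq 1/\mu$ and therefore $0 \leq 1 - \mu\ssize_t \leq 1$. Notably, smoothness plays no role in this argument, so the only remaining subtlety is the routine index bookkeeping in the telescoping product.
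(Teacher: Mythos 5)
Your proof is correct and follows essentially the same route as the paper: expand $\norms{x^{t+1}-\xopt}$, lower-bound the cross term via strong convexity at $(x^t,\xopt)$, cancel the residual $-2\ssize_t\lr f(x^t)-\fopt\rr$ against $\ssize_t^2\norms{\g(x^t)}$ using the stepsize upper bound, and chain the resulting contraction. Your added observation that $\ssize_t \leq 1/\mu$ (so each factor lies in $[0,1]$) is a nice sanity check the paper omits, but it changes nothing substantive.
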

These results are well-known; for reader's convenience we include the proofs 
in \Cref{sec:proof_pr_strongly_convex}.

\subsection{Guarantees for our algorithm}
Noting that our algorithm \tp{} uses stepsize smaller than Polyak stepsize,
\begin{equation} \label{eq:tp_polyak_bound}
    \ssize_{\tp{}} 
    \eqdef 2\frac {f(x^k)-f(y^k)}{\norms {\g(x^k)}} \g(x^k)
    \leq 2\frac {f(x^k)-\fopt}{\norms {\g(x^k)}} \g(x^k),
\end{equation}
\Cref{pr:strongly_convex} guarantees that \tp{} ensures a monotonic decrease in the functional values. Note that equality in  \eqref{eq:tp_polyak_bound} holds only when $y^k$ is already the solution of the optimization problem. 

Since our method sets stepsizes based on the difference between the model losses, our theoretical guarantees rely on the assumption that those losses remain sufficiently different, formalized below. 

\begin{assumption}\label{as:xydiff}
    There exists constants $1>\xydiff>0$ so that for sequences $x^k, y^k \in \R^d$ hold
    % $\fopt \geq f(y^k)$
    \begin{align}
         \lv f(x^k)-f(y^k) \rv
        &\geq \xydiff \ls \max \lc f(x^k), f(y^k) \rc -\fopt  \rs.
    \end{align}
\end{assumption}
\begin{assumption}\label{as:yodiff}
    There exists constants $1>\yodiff>0$ so that for iterates $x^k, y^k \in \R^d$ hold
    \begin{align}
        % \ls f(x^k)-f(y^k)  \rs 
        % &\geq \xydiff \ls f(x^k)-\fopt  \rs \\
        \min \lc f(x^k),f(y^k)\rc -\fopt
        &\geq \yodiff \ls \max \lc f(x^k), f(y^k) \rc -\fopt  \rs.
    \end{align}
\end{assumption}

For quadratic functions, both $\xydiff$ and $\yodiff$ are guaranteed to exist, as \Cref{le:quadratic} proves that ratio of functional values of sequences remains constant.
% \begin{equation*}
%     \frac{\lv f(x^k)-f(y^k) \rv}{\max \lc f(x^k), f(y^k) \rc -\fopt}, 
%     % \, \, 
%     \, \text{and} \,
%     \frac{\min \lc f(x^k),f(y^k)\rc -\fopt}{\max \lc f(x^k), f(y^k) \rc -\fopt}.
% \end{equation*}
% \begin{table}[h!]
% \centering
% \begin{tabular}{||c c c||} 
%  \hline
%  Dataset & $c$ & $d$ \\ [0.5ex] 
%  \hline\hline
%  a1a & 0.0003 & 0.999 \\
%  mushrooms & 0.0004 & 0.999 \\
%  colon-cancer & 0.0004 & 0.999 \\
%  diabetes-scale & 0.0068 & 0.993 \\
%  real-sim & 0.054 & 0.999 \\
%  w1a & 0.0002 & 0.999 \\
%  leukemia & 0.0005 & 0.9994 \\ [1ex] 
%  \hline
% \end{tabular}
% \caption{Values of bounds $c$ and $d$ from Assumptions \ref{as:xydiff} and \ref{as:yodiff} after $k=2000$ optimization steps.}
% \label{table:1}
% \end{table}
% \todo[inline]{justify with figure that these assumptions are satisfied in some practical scenarios}
% \todo[inline]{can we get a table showing max of this value for different datasets? (if it is good)}
For the strongly convex loss, our algorithm converges linearly, as formalized in the theorem below. 
% Its proof can be found in \Cref{sec:proof_pr_strongly_convex}.

\begin{theorem} \label{th:strongly_convex}
     If the function $f:\R^d \to \R$ is $\mu$-strongly convex (\Cref{def:convexity}) and $L$-smooth (\Cref{def:smoothness}) and sequences $\{x^t\}_{t=0}^k, \{y^t\}_{t=0}^k$ generated by Twin-Polyak satisfy Assumption \ref{as:xydiff}, then stepsize of Twin-Polyak
    \begin{align}
        \ssize_k\eqdef 2\frac {f(x^k)-f(y^k)}{\norms{\g(x^k)}}
    \end{align}
    satisfies
    \begin{align}
        \ssize_k
        \geq 2\xydiff \frac {f(x^k)-\fopt}{\norms{\g(x^k)}} 
        \geq \frac \xydiff L 
    \end{align}
    and 
    we have guaranteed linear convergence $\norms{x^{k}-\xopt} \leq \lr 1- \xydiff \frac \mu L \rr^{k} \norms{x^0-\xopt}.$ For $\varepsilon>0$, Twin-Polyak reaches $\varepsilon$-neighborhood in $\cO\lr \frac 1 \xydiff \frac L \mu \log \frac 1 \varepsilon \rr$ iterations. 
\end{theorem}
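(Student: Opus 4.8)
The plan is to first certify the two stepsize inequalities, and then turn the resulting lower bound on $\ssize_k$ into a per-iteration contraction that I can telescope. Throughout I adopt the convention (as in \Cref{le:quadratic}) that $f(x^k) > f(y^k)$, so that $x$ is the updated sequence; then the numerator of the stepsize is exactly $|f(x^k)-f(y^k)|$ and $\max\{f(x^k),f(y^k)\} = f(x^k)$. Substituting these into \Cref{as:xydiff} gives $f(x^k)-f(y^k) \geq \xydiff\,[f(x^k)-\fopt]$, and dividing by $\norms{\g(x^k)}$ immediately produces the first claimed inequality $\ssize_k \geq 2\xydiff\,\frac{f(x^k)-\fopt}{\norms{\g(x^k)}}$.

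For the second inequality I would invoke the standard consequence of $L$-smoothness, namely $\norms{\g(x^k)} \leq 2L\,(f(x^k)-\fopt)$, which follows by minimizing the quadratic upper bound of \Cref{def:smoothness} in $y$ at $y = x^k - \frac1L \g(x^k)$. This bounds $\frac{f(x^k)-\fopt}{\norms{\g(x^k)}} \geq \frac{1}{2L}$, so the first inequality sharpens to $\ssize_k \geq \xydiff/L$, a deterministic lower bound independent of the iterate.

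Next I would convert this into convergence. By \eqref{eq:tp_polyak_bound} the Twin-Polyak step on the active sequence is a valid Polyak-bounded step, so \Cref{pr:strongly_convex} applies and gives the one-step decrease $\norms{x^{k+1}-\xopt} \leq (1-\mu\ssize_k)\,\norms{x^k-\xopt}$; plugging in $\ssize_k \geq \xydiff/L$ bounds each contraction factor by $1-\xydiff\frac{\mu}{L}$, and telescoping over the updates yields $\norms{x^k-\xopt} \leq (1-\xydiff\frac{\mu}{L})^k \norms{x^0-\xopt}$. The complexity statement then follows by setting $(1-\xydiff\frac{\mu}{L})^k \leq \varepsilon$ and using $\log(1-t)\leq -t$, which gives $k = \cOb{\frac1\xydiff \frac L\mu \log\frac1\varepsilon}$.

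The main obstacle is the telescoping step, because Twin-Polyak advances only one of the two sequences per iteration: on iterations where $x$ is held fixed we have $\norms{x^{k+1}-\xopt} = \norms{x^k-\xopt}$, which does not contract and so cannot directly feed the clean product bound written in terms of $x^k$. I would resolve this either by tracking the moving iterate under a relabeling (so that the contracting sequence is always the one being measured), or by arguing, as in the quadratic case of \Cref{le:quadratic}, that under \Cref{as:xydiff} the two sequences alternate roles so that every iteration contracts the active distance to $\xopt$; making this alternation rigorous for general strongly convex $f$ rather than just quadratics is the part that needs the most care.
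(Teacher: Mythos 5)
Your proposal is correct and takes essentially the same route as the paper: the paper's entire proof is the single sentence ``Follows directly from \Cref{pr:strongly_convex},'' and your three steps --- obtaining $\ssize_k \geq 2\xydiff\,\frac{f(x^k)-\fopt}{\norms{\g(x^k)}}$ from \Cref{as:xydiff}, sharpening it to $\ssize_k \geq \xydiff/L$ via the smoothness inequality $\norms{\g(x^k)} \leq 2L\lr f(x^k)-\fopt\rr$, and feeding the resulting stepsize lower bound into the contraction factor of \Cref{pr:strongly_convex} --- are precisely the details that the paper's ``directly'' suppresses.

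The ``main obstacle'' you flag at the end is real, but it is an imprecision in the paper's statement rather than a defect in your argument: the paper's one-line proof does not address it either. Since \tp{} advances only one sequence per iteration, the bound $\norms{x^{k}-\xopt} \leq \lr 1- \xydiff \frac \mu L \rr^{k} \norms{x^0-\xopt}$ cannot hold for the literal $x$-sequence of \Cref{alg:tp} with $k$ counting all iterations; it holds with $k$ replaced by the number of updates actually applied to $x$. The theorem is implicitly written under your first suggested fix, the relabeling convention in which $x^k$ denotes the currently-worse (hence updated) iterate --- visible already in the fact that the stated stepsize $2\frac{f(x^k)-f(y^k)}{\norms{\g(x^k)}}$ is nonnegative only when $f(x^k)>f(y^k)$. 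Do not pursue your second suggested fix: alternation of roles is special to quadratics. A descent-lemma calculation shows that after the \tp{} step one only gets $f(x^{k+1}) \leq f(y^k) - \lr f(x^k)-f(y^k)\rr\lr 1 - 2L\frac{f(x^k)-f(y^k)}{\norms{\g(x^k)}}\rr$, and the bracket can be negative for ill-conditioned $f$, so the sequences need not swap every step. The clean rigorous statement is per-update contraction: each iteration multiplies the product $\norms{x^k-\xopt}\cdot\norms{y^k-\xopt}$ by at most $1-\xydiff\frac \mu L$ (one factor contracts, the other is unchanged), hence the better of the two sequences satisfies the claimed bound with exponent $k/2$. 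This costs only a factor of two in the iteration count and leaves the stated complexity $\cO\lr \frac 1 \xydiff \frac L \mu \log \frac 1 \varepsilon \rr$ intact.
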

\begin{proof}[\pof{\Cref{th:strongly_convex}}]
    Follows directly from \Cref{pr:strongly_convex}.
\end{proof}
% \begin{lemma} \label{le:convex_converge_together}
%     Assume that function $f:\R^d \to \R$ is convex (\Cref{def:convexity}) and with $G$-bounded gradients (\Cref{def:gbound}) and sequences $\{x^t\}_{t=0}^k, \{y^t\}_{t=0}^k$ generated by Twin-Polyak with stepsize
%     \begin{align}
%     %     D_F(x,y) &\eqdef F(x)-F(y)-\la \nabla F(y), y-x \ra\\
%         % \ssize_k &\eqdef \min \lc \frac {2D_{f_i}(\xopt,x^k)}{\norms {\g_i(x^k)}}, \ssize_b \rc ,
%         \ssize_k\eqdef 2\frac {f(x^k)-f(y^k)}{\norms{\g(x^k)}}
%     \end{align}
%     satisfies Assumption \ref{as:xydiff}.
%     Then after $k$ updates of the sequence $x$ the models converge together with the rate
%     \begin{align}
%         \min_{t\in\{1, \dots, k\}}  f(x^t)-\fopt
%         &\leq \frac {G\norm{x^0-\xopt}}{2 \sqrt{\lr \frac {1}{\xydiff} - 1 \rr k} }.
%     \end{align}
% \end{lemma}
For functions with $G$-bounded gradients, \tp{} converges with the rate depending on $k$ as $\cOb {k^{-\frac 12}}$.
\begin{lemma}\label{le:convex_gbound}
    If the function $f:\R^d \to \R$ is convex (\Cref{def:convexity}) and with $G$-bounded gradients (\Cref{def:gbound}) and sequences $\{x^t\}_{t=0}^k, \{y^t\}_{t=0}^k$ generated by Twin-Polyak with stepsize
    \begin{align}
    %     D_F(x,y) &\eqdef F(x)-F(y)-\la \nabla F(y), y-x \ra\\
        % \ssize_k &\eqdef \min \lc \frac {2D_{f_i}(\xopt,x^k)}{\norms {\g_i(x^k)}}, \ssize_b \rc ,
        \ssize_k\eqdef 2\frac {f(x^k)-f(y^k)}{\norms{\g(x^k)}}.
    \end{align} 
    satisfies Assumptions~\ref{as:xydiff}, \ref{as:yodiff}. Then, after $k$ updates, we have the following guaranteed decrease:
    \begin{align}
        \min_{t\in\{1, \dots, k\}}  f(x^t)-\fopt
        &\leq \frac {G\norm{x^0-\xopt}}{2 \sqrt{\xydiff\yodiff k} }.
    \end{align}
\end{lemma}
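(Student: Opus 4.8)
The plan is to run the familiar one-step analysis behind the Polyak rate (the descent inequality underlying \Cref{pr:polyak_rate}) on whichever of the two iterates is actually moved at step $k$, and to accumulate the progress in the joint potential $D_k \eqdef \norms{x^k-\xopt}+\norms{y^k-\xopt}$. Since exactly one coordinate is updated at each step while the other is frozen, any decrease of the active squared distance is a decrease of $D_k$.

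First I would treat the case $f(x^k)>f(y^k)$, where $x$ is updated and $y^{k+1}=y^k$ (the other case being symmetric). Expanding the square and using convexity, $\la\g(x^k),x^k-\xopt\ra\geq f(x^k)-\fopt$, gives
\begin{align}
\norms{x^{k+1}-\xopt}
\leq \norms{x^k-\xopt}-2\ssize_k\lr f(x^k)-\fopt\rr+\ssize_k^2\norms{\g(x^k)}. \nonumber
\end{align}
Substituting $\ssize_k=2\frac{f(x^k)-f(y^k)}{\norms{\g(x^k)}}$ makes the linear and quadratic terms collapse: writing $A\eqdef f(x^k)-\fopt$ and $B\eqdef f(y^k)-\fopt$, they combine to $-4\frac{(A-B)A}{\norms{\g(x^k)}}+4\frac{(A-B)^2}{\norms{\g(x^k)}}=-4\frac{(A-B)B}{\norms{\g(x^k)}}$, so that
\begin{align}
D_{k+1}\leq D_k-4\frac{\lr f(x^k)-f(y^k)\rr\lr f(y^k)-\fopt\rr}{\norms{\g(x^k)}}. \nonumber
\end{align}

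This algebraic collapse is the crux, and it is exactly where the factor $2$ in the \tp{} stepsize is spent: the step is tuned so that the one-step progress is governed by the product of the two gaps $\lr f(x^k)-f(y^k)\rr$ and $\lr f(y^k)-\fopt\rr$, i.e. the gap between the sequences times the suboptimality of the lower one. Next I would invoke both assumptions: \Cref{as:xydiff} lower-bounds $f(x^k)-f(y^k)\geq\xydiff\lr\max\lc f(x^k),f(y^k)\rc-\fopt\rr$, while \Cref{as:yodiff} lower-bounds $f(y^k)-\fopt=\min\lc f(x^k),f(y^k)\rc-\fopt\geq\yodiff\lr\max\lc f(x^k),f(y^k)\rc-\fopt\rr$. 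Combining these with $G$-boundedness, $\norms{\g(x^k)}\leq G^2$, yields the clean per-step decrease
\begin{align}
D_{k+1}\leq D_k-\frac{4\xydiff\yodiff}{G^2}\lr\max\lc f(x^k),f(y^k)\rc-\fopt\rr^2, \nonumber
\end{align}
which holds verbatim in the symmetric $y$-update case. I would then telescope over the first $k$ steps, bound the sum by $D_0$ using $D_k\geq 0$, pass from the sum to the minimum via $\min_t(\cdot)^2\leq\frac1k\sum_t(\cdot)^2$, and take a square root to get $\min_t\lr\max\lc f(x^t),f(y^t)\rc-\fopt\rr\leq\frac{G\sqrt{D_0}}{2\sqrt{\xydiff\yodiff k}}$; since $f(x^t)-\fopt\leq\max\lc f(x^t),f(y^t)\rc-\fopt$ for every $t$, the same bound transfers to $\min_t\lr f(x^t)-\fopt\rr$, matching the stated constant $\tfrac12$.

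The main obstacle is conceptual rather than computational: recognizing that both assumptions are indispensable and serve distinct purposes — \Cref{as:xydiff} controls the size of the step through the observable gap $f(x^k)-f(y^k)$, which stands in for the unknown $f(x^k)-\fopt$ of the classical Polyak stepsize, while \Cref{as:yodiff} certifies that the frozen (lower) sequence is not already optimal, so that genuine progress of order $\lr\max-\fopt\rr^2$ can be extracted. A minor point to reconcile is that the joint potential delivers $\sqrt{D_0}=\sqrt{\norms{x^0-\xopt}+\norms{y^0-\xopt}}$ as the initial-distance factor; the statement's $\norm{x^0-\xopt}$ is this quantity up to the contribution of the companion point $y^0$.
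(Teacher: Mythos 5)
Your core argument is the paper's own: expand the squared distance, apply convexity, substitute the \tp{} stepsize so that the linear and quadratic terms collapse to $-4\lr f(x^k)-f(y^k)\rr\lr f(y^k)-\fopt\rr/\norms{\g(x^k)}$, lower-bound the two factors via \Cref{as:xydiff} and \Cref{as:yodiff} (you are right that both are needed, even though the paper's proof only cites the first), bound $\norms{\g(x^k)}\leq G^2$, and telescope. The only real difference is the bookkeeping across the two sequences, and it cuts both ways. The paper telescopes $\norms{x^t-\xopt}$ alone, obtaining $\frac{4\xydiff\yodiff}{G^2}\sum_{t=1}^{k}\ls f(x^t)-\fopt\rs^2\leq\norms{x^0-\xopt}$ and hence exactly the stated constant; but that summation implicitly treats every one of the $k$ iterations as an $x$-update (on an iteration where $y$ moves, $x^{t+1}=x^t$ and no decrease of $\norms{x^t-\xopt}$ occurs, yet the sum still charges a term $\ls f(x^t)-\fopt\rs^2$), so it is valid only if one reads the lemma as counting updates of the $x$-sequence. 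Your joint potential $D_k=\norms{x^k-\xopt}+\norms{y^k-\xopt}$ repairs precisely this: whichever iterate moves, $D_k$ decreases by at least $\frac{4\xydiff\yodiff}{G^2}\lr\max\lc f(x^k),f(y^k)\rc-\fopt\rr^2$, and the argument covers the literal algorithm with no case distinction. The price is your final bound $\min_{t}\lr f(x^t)-\fopt\rr\leq\frac{G\sqrt{\norms{x^0-\xopt}+\norms{y^0-\xopt}}}{2\sqrt{\xydiff\yodiff k}}$: its initial-distance factor is $\sqrt{D_0}$, not the stated $\norm{x^0-\xopt}$, so strictly speaking you prove a slightly weaker inequality (the two coincide only when $y^0=\xopt$), a discrepancy you flag but understate. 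Both routes give the same $\cOb{k^{-\frac 12}}$ rate; to match the lemma verbatim you would either restrict the telescoping to the iterations at which $x$ is updated, as the paper implicitly does, or accept $\sqrt{D_0}$ in the statement.
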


% \begin{wrapfigure}{r}{0.5\textwidth}
% \includegraphics[width=0.9\linewidth]{plots/stepsize_sequences.pdf} 
% \caption{Two stepsize sequences produced by \tp{} during training logistic regression on \textbf{mushrooms} dataset.}
% \label{fig:tp-sequences}
% \end{wrapfigure}

% \begin{figure}[h!]
%     \centering
%     \includegraphics[width=\linewidth]{plots/stepsize_sequences.pdf}
%     \caption{Two stepsize sequences produced by \tp{} during training logistic regression on \textbf{mushrooms} dataset.}
%     \label{fig:tp-sequences}
% \end{figure}

\section{Stochastic gradients}

In modern machine learning, the loss function often takes the form of an empirical risk over data points, making the objective \eqref{eq:loss} an empirical risk minimization problem:
\begin{equation}
    \min_{x \in \R^d} \lc f(x) \eqdef \avein in f_i(x) \rc,
\end{equation}
where the number of data points $n$ is large. Instead of evaluating the full loss $f(x)$ and computing the full gradients $\g(x)$, it is often preferable to use a sample batch of stochastic gradients. 
Denoting $\batch$ a batch of indices corresponding to a subset of stochastic functions $\batch \in 2^{\{1, 2, \dots, n\}}$, we denote the batch functional value $\fbi{}(x) \eqdef \avesn i {\batch}f_i(x)$ and batch gradient $\gbi{}(x) \eqdef \avesn i {\batch}\g_i(x).$

\begin{remark}
In practice, the batch size is typically chosen to fully utilize GPU memory, making it a hardware-related parameter rather than one dictated by the loss.
Similarly, target accuracy $\varepsilon$ is often determined by the precision of arithmetic operations rather than the properties of the loss.
\end{remark}

We are going to propose a stochastic variant of \tp{} that replaces full gradients with stochastic gradients. This introduces stochasticity to both the gradient estimator and the stepsize schedule, making theoretical analysis much more challenging. For example, \Cref{le:sig} in Appendix demonstrates that utilizing stepsizes dependent on stochastic gradients makes it impossible to preserve gradient unbiasedness. Therefore, in the stochastic setup, we prioritize practical performance and implementability over theoretical guarantees. 

How can we obtain a stochastic variant of \tp{}? 
One possible approach is to directly replace the full loss/gradients with their stochastic counterpart (see \Cref{alg:stp}, \stp{}, in the appendix).
% Replacing full functional loss and gradients with the stochastic variants naively leads to me. Additionally,
However, leveraging insights of \citet{schaipp2023momo}, we can enhance the naive algorithm with a momentum, leading to the algorithm \stpm{} (\Cref{alg:stpm}).
\begin{algorithm} 
    \caption{\stpm{}: Stochastic Twin-Polyak method with momentum} \label{alg:stpm}
    \begin{algorithmic}[1]
        \State \textbf{Requires:} Initial points $x^0, y^0 \in \R^d,$ target accuracy $\varepsilon>0$, batch size $\tau$, momentum parameter $\mom \in [0,1)$, number of steps $K$.
        \For {$k=0,1,2\dots,K-1$}
            \State Sample batch of stochastic functions $\batch_k$
            % \State Compute $\fbi k(x)$ and $\fbi k(y)$
            % \If {$\vert \fbi k(x^{k})>\fbi k (y^{k}) \vert < \varepsilon$}
            %     \State \textbf{continue}
            % \EndIf
            \State $\bar{f}_x^k = \mom \bar{f}_x^{k-1}+ (1-\mom) \fbi k(x^k) $
            \State $g_x^k = \mom g_x^{k-1}+ (1-\mom) \gbi k(x^k) $
            \State $z_x^k = \mom z_x^{k-1}+ (1-\mom) \la \gbi k(x^k), x^k \ra $
            \State $h_x^k = \bar{f}_x^k + \la g_x^k, x^k \ra - z_x^k $
            
            \State 
            
            \State $\bar{f}_y^k = \mom \bar{f}_y^{k-1}+ (1-\mom) \fbi k(y^k) $
            \State $g_y^k = \mom g_y^{k-1}+ (1-\mom) \gbi k(y^k) $
            \State $z_y^k = \mom z_y^{k-1}+ (1-\mom) \la \gbi k(y^k), y^k \ra $
            \State $h_y^k = \bar{f}_y^k + \la g_y^k, y^k \ra - z_y^k $

            \If {$h_x^k>h_y^k$}
                \State $y^{k+1} = y^k$
                \State $x^{k+1} = x^k- 2\frac {h_x^k-h_y^k}{\norms {g_x^k}} g_x^k$
                % \Comment{Step for $x$}
            \Else%{$f(x^{k})<f(y^{k})$}
                % \State Compute $\g(y)$
                \State $x^{k+1} = x^k$
                \State $y^{k+1} =  y^k- 2\frac {h_y^k-h_x^k}{\norms {g_y^k}} g_y^k$
                % \Comment{Step for $y$}
            \EndIf
            % \EndIf
        \EndFor
    \State Return one of the points $(x^K, y^K)$ at random.
    \end{algorithmic}
\end{algorithm}

\section{Experiments} \label{sec:experiments}
We perform numerical comparative studies of a linear model solving binary classification and regression problems from the \libsvm{} dataset repository \citep{Chang2011LIBSVM}. We compare our method against baselines: \sgd{} with $\eta=1/L$, \spsmax{}, \decsps{}, and \sls{}. 

\begin{figure}
    \centering
    \begin{subfigure}{\textwidth}
        \centering
        \includegraphics[width=0.49\linewidth]{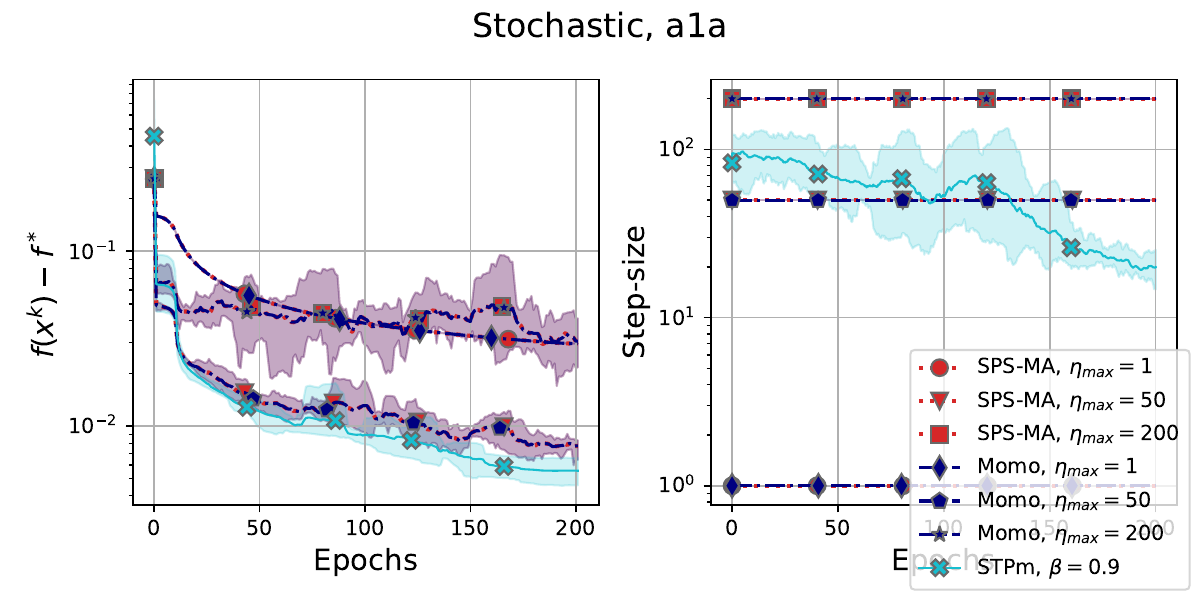}
        \hfill
        \includegraphics[width=0.49\linewidth]{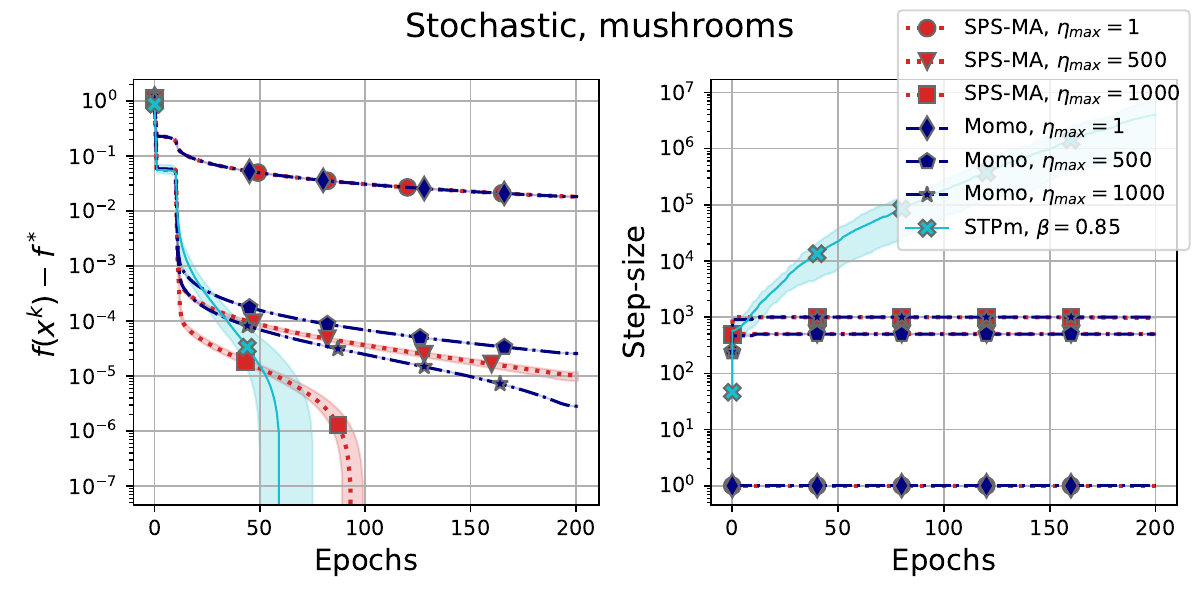}
        \includegraphics[width=0.49\linewidth]{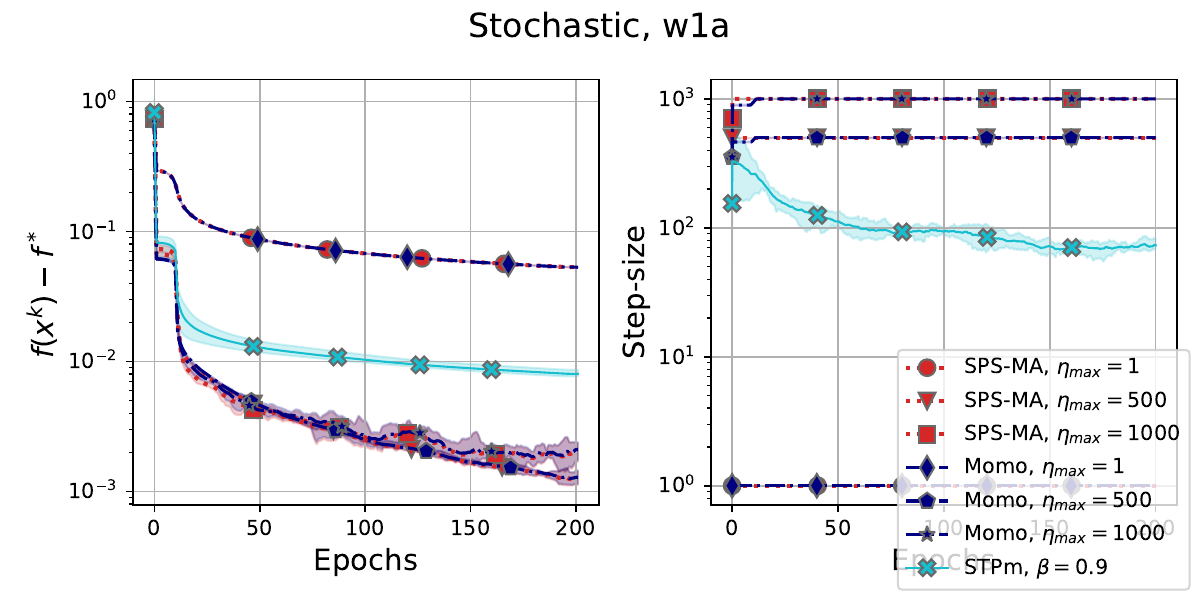}
        \hfill
        \includegraphics[width=0.49\linewidth]{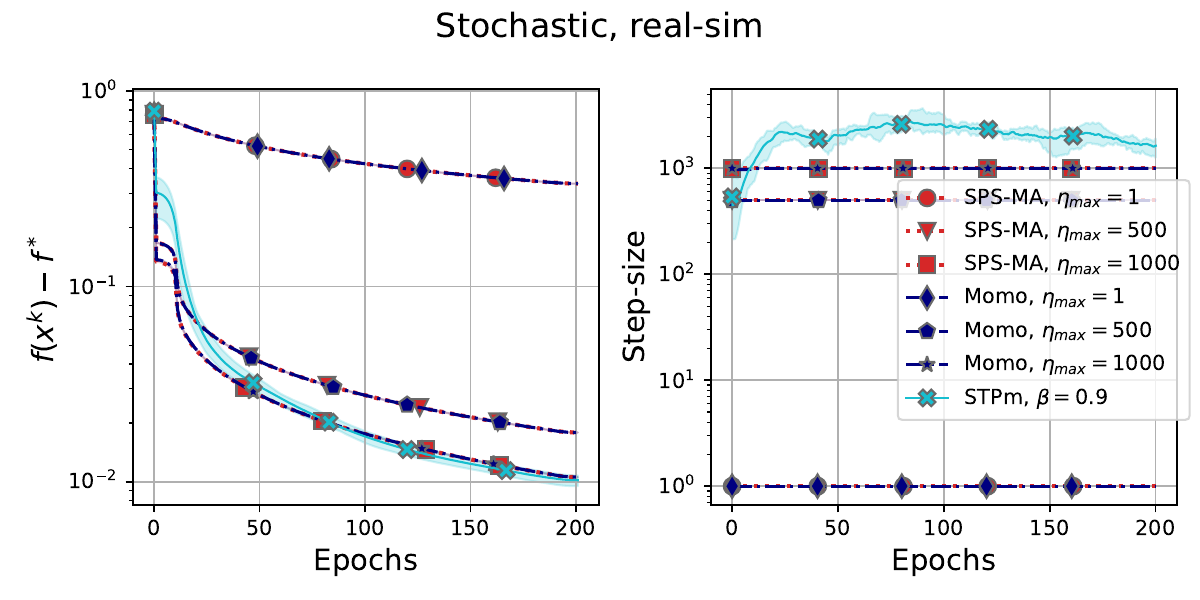}
    \end{subfigure}
    \caption{Performance and stepsize evolution of \stpm{} compared to other methods on binary classification problems minimizing \textbf{logistic regression} loss function.}
    \label{exp:stoch-logreg}
    \vspace{-10pt}
\end{figure}

For \spsmax{}, we fine-tune the stepsize bounding hyperparameter \(\gamma > 0\). Notably, \spsmax{} is highly sensitive to this bound and in many cases, \spsmax{} uses this bound from the beginning of training, effectively turning it into \sgd{} with \(\eta = \gamma\). Unsurprisingly, if $\gamma$ is chosen too large, it can cause divergence.
% \spsmax{} As per the author's recommendation, w
We select the recommended value \(c = 1/2\) for strongly convex functions.

Decreasing SPS (\decsps{}) has two hyperparameters: \( \eta_b \), a step-size bound similar to \spsmax{}, set by default to \( \eta_b = 10 \), and a scheduling scheme \( c_k = c_0 \sqrt{k+1} \), where \( c_0 = 1 \) and \( k \) is the current iteration. 
% All hyperparameters are set according to the author's recommendations. 
Notably, \decsps{} is still sensitive to the choice of \( c_0 \) (see \Cref{fig:sensitivity_decsps}).

As an additional baseline, we use Stochastic Line Search (\sls{}) that employs \textit{Armijo} line-search procedure with a step-size decrease factor of \( \beta = 0.9 \) and a sufficient decrease parameter of \( c = 0.1 \).  

We initialize all methods from the same initial point $x_0 \sim \N(\mathbf 0, \mathbf 1)$, and \stpm{} additionally samples a point $y_0 \sim \N(\mathbf 0, \mathbf 1)$. All experiments are repeated using five independent seeds. Additional numerical comparisons and technical details are provided in Appendix~\ref{app:additional-experiments}, and the source code is publicly available online\footnote{\url{https://github.com/fxrshed/twin-polyak}}. 

In \Cref{exp:stoch-logreg}, methods \spsmax{} and \momo{} are quickly reaching their stepsize upper bound  $\gamma$, effectively performing \sgd{} with stepsize $\eta=\gamma$ (disregarding the Polyak stepsize formula). This illustrates sensitivity of \spsmax{} and \momo{} to $\gamma$. 
\Cref{fig:lr-sweep} shows that parameter-free performance of the \stpm{} is comparable to baselines with well-tuned hyperparameters.
The experiments demonstrate that our methods achieve similar or better performance and even without any fine-tuning.

% Figure~\ref{exp:stoch-lstsq-4}

% \begin{figure*}
%     \centering
%     \begin{subfigure}{\textwidth}
%         \centering
%         \includegraphics[width=0.49\linewidth]{plots/housing_scale_stochastic_lstsq_2.pdf}
%         \includegraphics[width=0.49\linewidth]{plots/abalone_scale_stochastic_lstsq_2.pdf}
%         \includegraphics[width=0.49\linewidth]{plots/bodyfat_scale_stochastic_lstsq_2.pdf}
%         \includegraphics[width=0.49\linewidth]{plots/cpusmall_scale_stochastic_lstsq_2.pdf}
%     \end{subfigure}
%     \caption{Performance and stepsize evolution of \stp{} compared to other methods on regression problems minimizing \textbf{least squares} loss function.}
% \end{figure*}
\begin{figure}
    \centering
    \begin{subfigure}{0.3\textwidth}
        \centering
        \includegraphics[width=\linewidth]{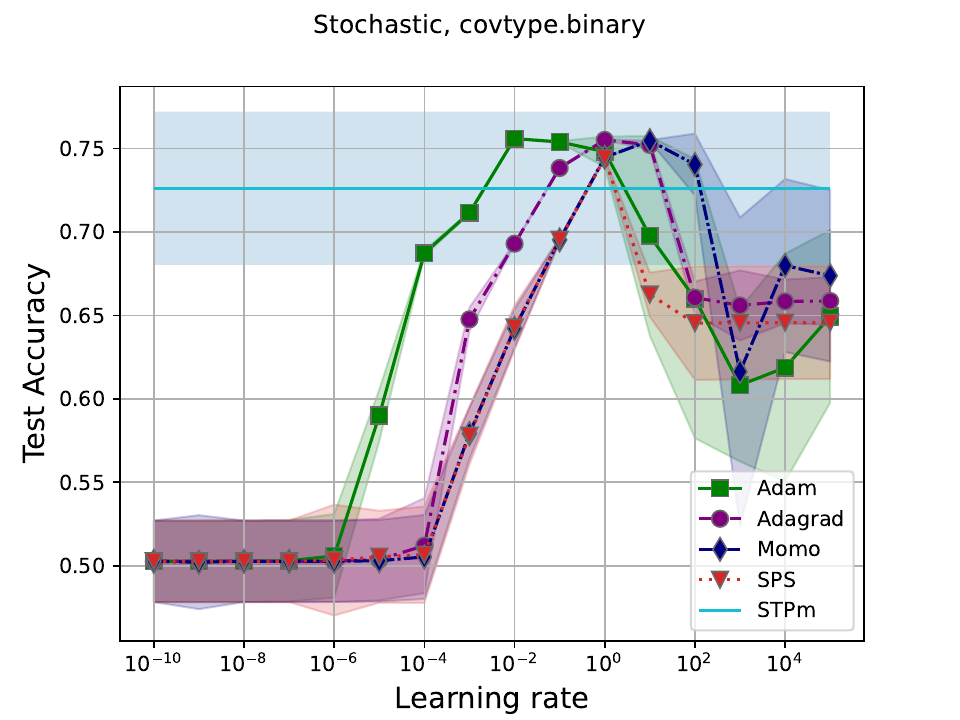}
        \caption{Linear model}
    \end{subfigure}
    \begin{subfigure}{0.3\textwidth}
        \centering
        \includegraphics[width=\linewidth]{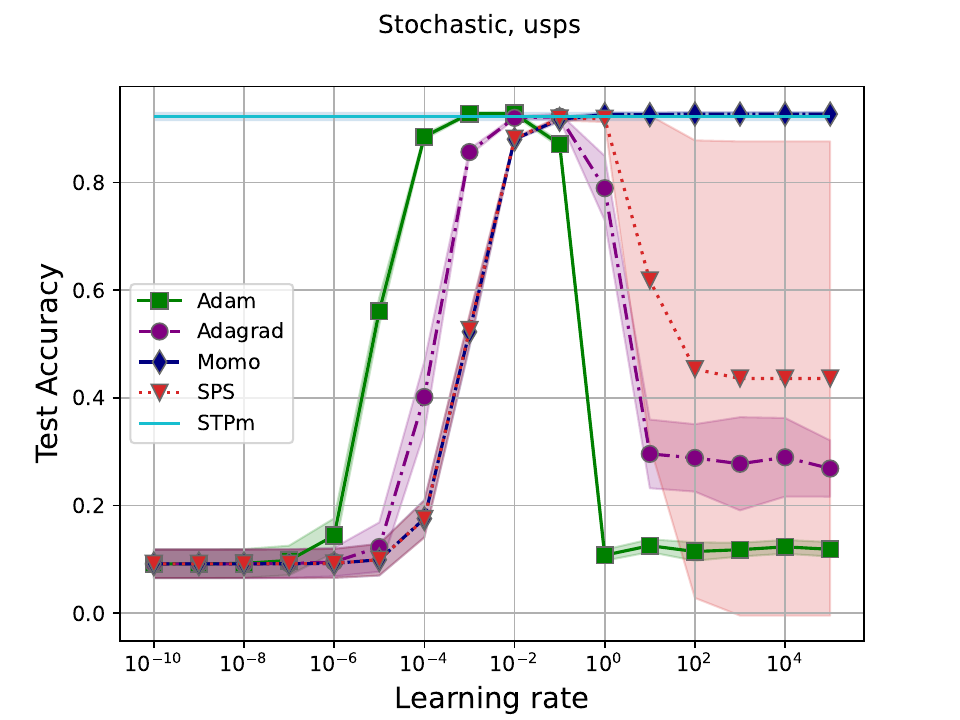}
        \caption{MLP}
    \end{subfigure}
    \begin{subfigure}{0.3\textwidth}
        \centering
        \includegraphics[width=\linewidth]{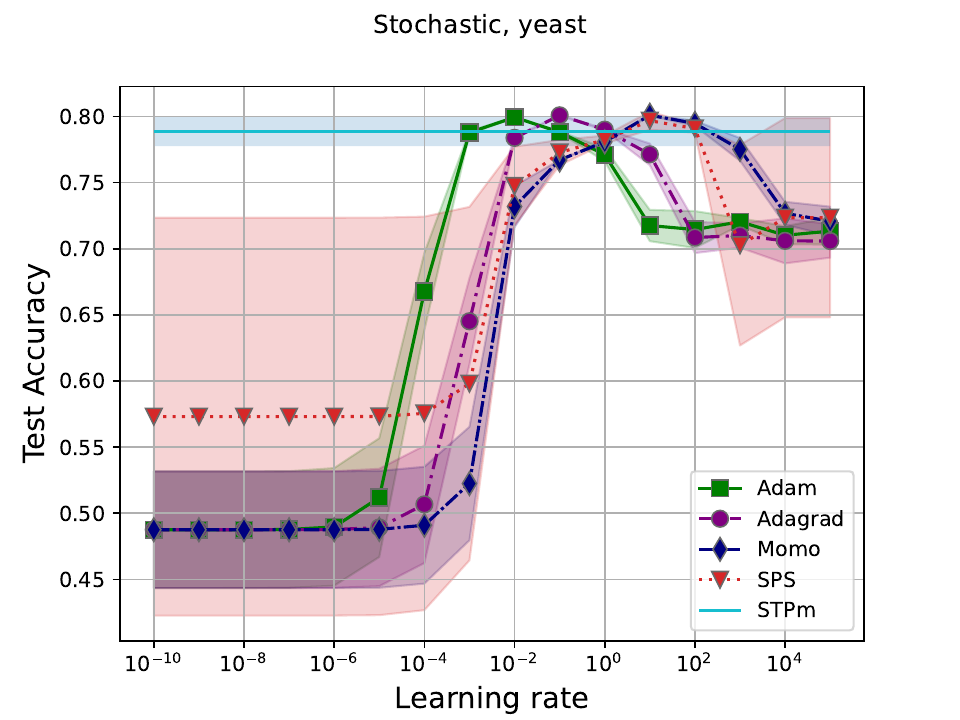}
        \caption{Linear model}
    \end{subfigure}
    \caption{Test accuracy after 50 epochs, for varying learning rate. \stpm{} does not use a learning rate. The shaded area depicts the mean $\pm$ standard deviation.}
    \label{fig:lr-sweep}
\end{figure}

\section{Limitations} \label{sec:limitations}
While \tp{} eliminates the need for hyperparameter tuning, it involves maintaining and updating two sequences of iterates, introducing additional overhead. Also, like many gradient-based optimization methods, the performance of \tp{} can be sensitive to the initialization of the iterates. Poor initialization of $x^0$ and $y^0$ might lead to suboptimal performance.

\section{Conclusion and future work}
In this paper, we introduced novel parameter-free optimization methods \tp{} and \stpm{} based on Polyak stepsizes. Our methods remove the impractical requirement of Polyak stepsizes to know of optimal functional value $f^*$ by estimating it via a complementary sequence of iterates.

In the setting where computing full gradients is feasible, \tp{} provably enjoys linear convergence for the strongly convex functions. 
For the practical machine learning setup where usage of stochastic gradients is preferred, we provide a numerical comparison of the method \stpm{} on binary classification and regression problems, and \stpm{} demonstrates competitive performance while being completely parameter-free. 

Further experimental validation on a wide range of machine learning tasks, such as training deep neural networks, reinforcement learning, or large-scale natural language processing tasks, is crucial. These empirical studies could help assess the robustness and generalization of the proposed methods across different problem domains and provide better insight into their real-world applicability.

\nocite{horvath2022adaptive}
% \clearpage

\bibliographystyle{plainnat}
\bibliography{references}

%%%%%%%%%%%%%%%%%%%%%%%%%%%%%%%%%%%%%%%%%%%%%%%%%%%%%%%%%%%%

\appendix

\clearpage

\section{Stochastic variant of \tp{}}
For the reader's convenience, we include a method that directly replaces full loss and gradients with their stochastic counterparts.
\begin{algorithm} 
    \caption{\stp{}: Stochastic Twin-Polyak method} \label{alg:stp}
    \begin{algorithmic}[1]
        \State \textbf{Requires:} Initial points $x^0, y^0 \in \R^d,$ target accuracy $\varepsilon>0$, batch size $\tau$, number of steps $K$.
        \For {$k=0,1,2\dots, K-1$}
            \State Sample batch of stochastic functions $\batch_k$
            \State Compute $\fbi k(x)$ and $\fbi k(y)$
            \If {$\vert \fbi k(x^{k}) - \fbi k (y^{k}) \vert < \varepsilon$}
                \State \textbf{continue}
            \ElsIf {$\fbi k(x^{k})>\fbi k(y^{k})$}
                % \State Compute $\g(x)$
                \State $y^{k+1} = y^k$
                \State $x^{k+1} = x^k- 2\frac {\fbi k(x^k)-\fbi k(y^k)}{\norms {\gbi k(x^k)}} \gbi k(x^k)$
                % \Comment{Step for $x$}
            \Else%{$f(x^{k})<f(y^{k})$}
                % \State Compute $\g(y)$
                \State $x^{k+1} = x^k$
                \State $y^{k+1} =  y^k- 2\frac {\fbi k(y^k)-\fbi k(x^k)}{\norms {\gbi k(y^k)}} \gbi k(y^k)$
                % \Comment{Step for $y$}
            \EndIf
        \EndFor
        \State Return $\argmin_{z\in x^K, y^K} f(z)$
    \end{algorithmic}
\end{algorithm}

\section{Additional experiments}
\label{app:additional-experiments}
In this section, we include the detailed description of experiments, additional experiments with supplementary performance metrics, such as $\|\nabla f(x^k) \|^2$ and accuracy on test data, for stochastic experiments. Also, we include deterministic experiments.  

% \subsection{Detailed description of experiments}
\subsection*{Binary classification using logistic regression loss} 
We evaluate the performance of the proposed algorithms on a binary classification task by minimizing the logistic regression loss.
For data points ${\{(a_i,b_i)\}}_{i=1}^n,$ where $a_i \in \R^d, b_i \in \{-1, +1\}$ we aim to minimize
\begin{align}\label{eq:logistic}
    \min_{x \in \R^d }\Big\{ 
    f(x) &= 
    \frac{1}{n}\sum_{i=1}^n \log \lr 1 + e^{-b_i \langle a_i, x\rangle} \rr \Big\}.
\end{align}

\subsection*{Regression using least squares loss}
We evaluate the performance of the proposed algorithms on a regression task by minimizing the least squares loss.
For data points ${\{(a_i,b_i)\}}_{i=1}^n,$ where $a_i \in \R^d, b_i \in \R$ we aim to minimize
\begin{align}\label{eq:lstsq}
 \min_{x \in \R^d }\Big\{ 
    f(x) &= 
    \frac{1}{2n}\sum_{i=1}^n \lr\langle a_i, x \rangle - b_i \rr^2
    \Big\}.
\end{align}

\subsection*{Technical details}
All experiments were run with $5$ different seeds $(0, 1, 2, 3, 4)$ using \textit{NumPy} and \textit{PyTorch} on a computing machine with AMD EPYC 7402 24-Core Processor with 2.8GHz of the base clock. For deterministic experiments, an average of $5$ different seeds represents runs with 5 different $x_0$ and $y_0$ initializations for all methods.

\begin{figure*}
    \centering
    \begin{subfigure}{\textwidth}
        \centering
        \includegraphics[width=\linewidth]{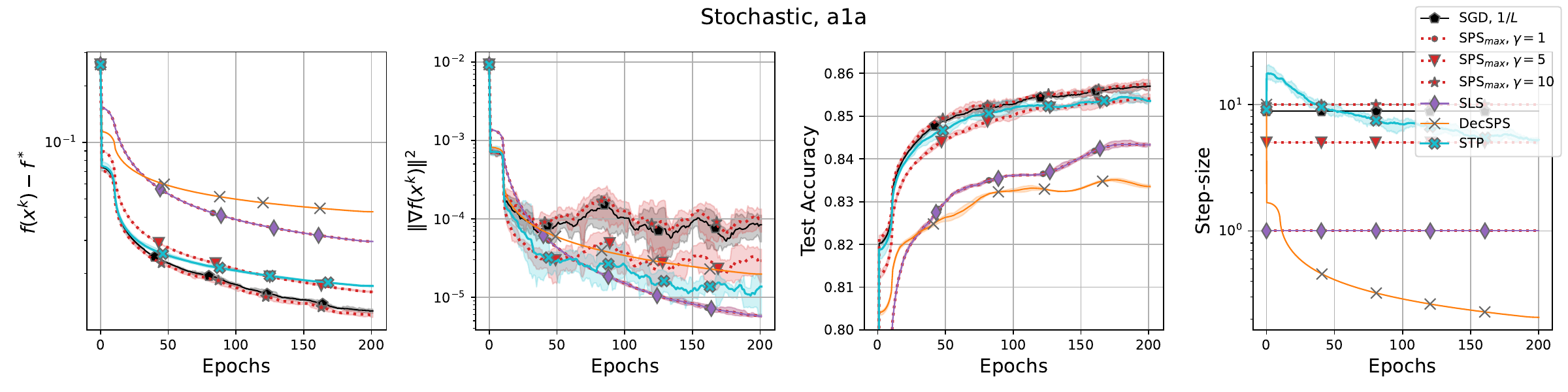}
        % \hfill
        
        \includegraphics[width=\linewidth]{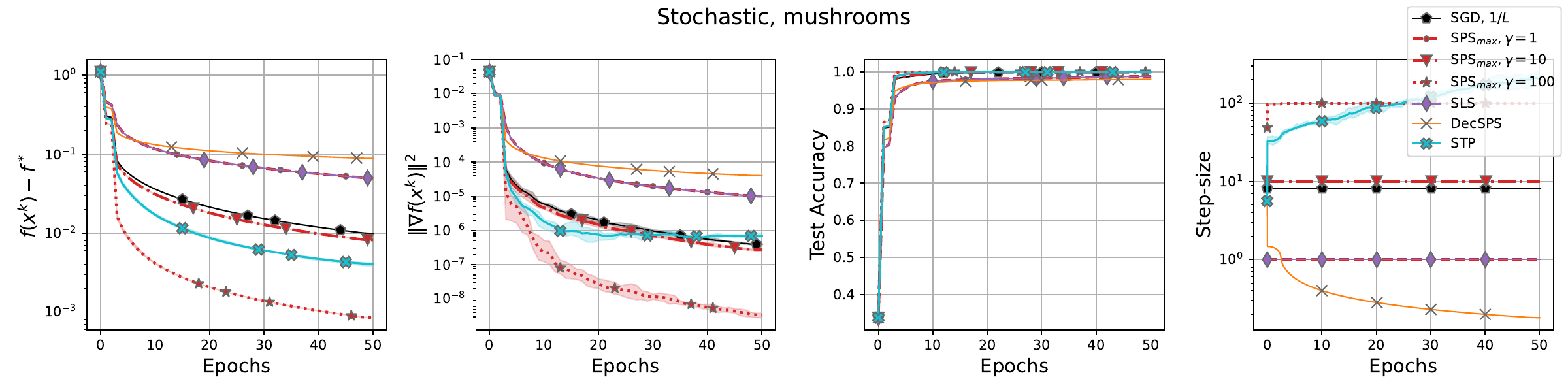}

        \includegraphics[width=\linewidth]{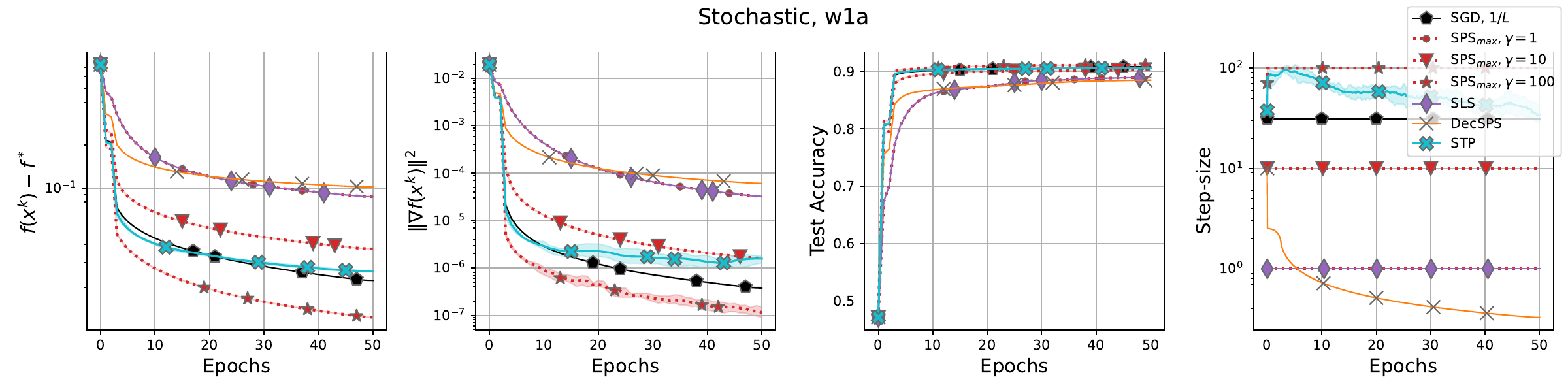}
        % \hfill
        
        \includegraphics[width=\linewidth]{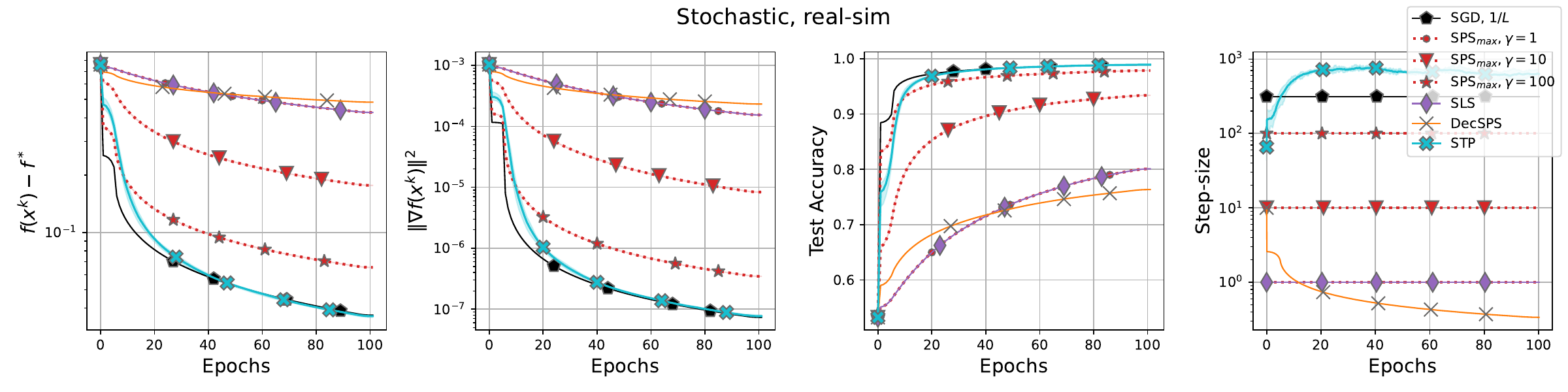}
    \end{subfigure}
    \caption{Performance and stepsize evolution of \stp{} compared to other methods on binary classification problems minimizing \textbf{logistic regression} loss function.}
    \label{exp:stoch-logreg-4}
\end{figure*}

\begin{figure*}
    \centering
    \begin{subfigure}{\textwidth}
        \centering
        \includegraphics[width=\linewidth]{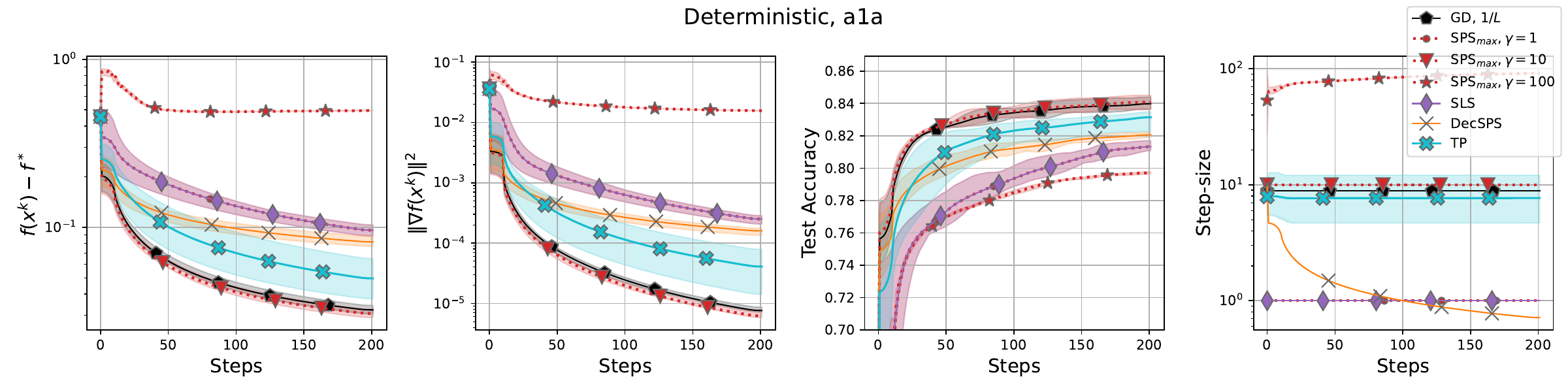}
        % \hfill
        
        \includegraphics[width=\linewidth]{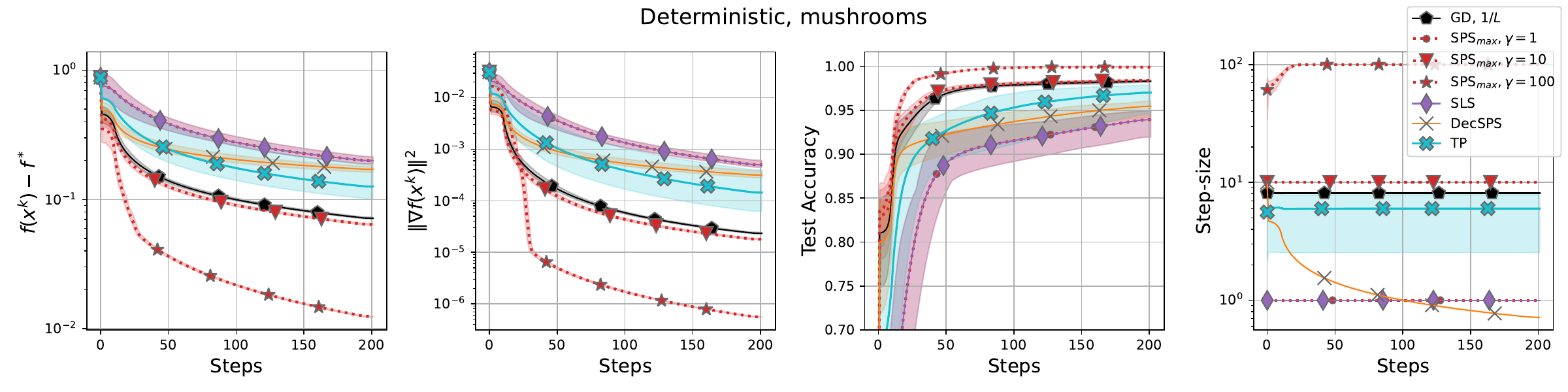}

        \includegraphics[width=\linewidth]{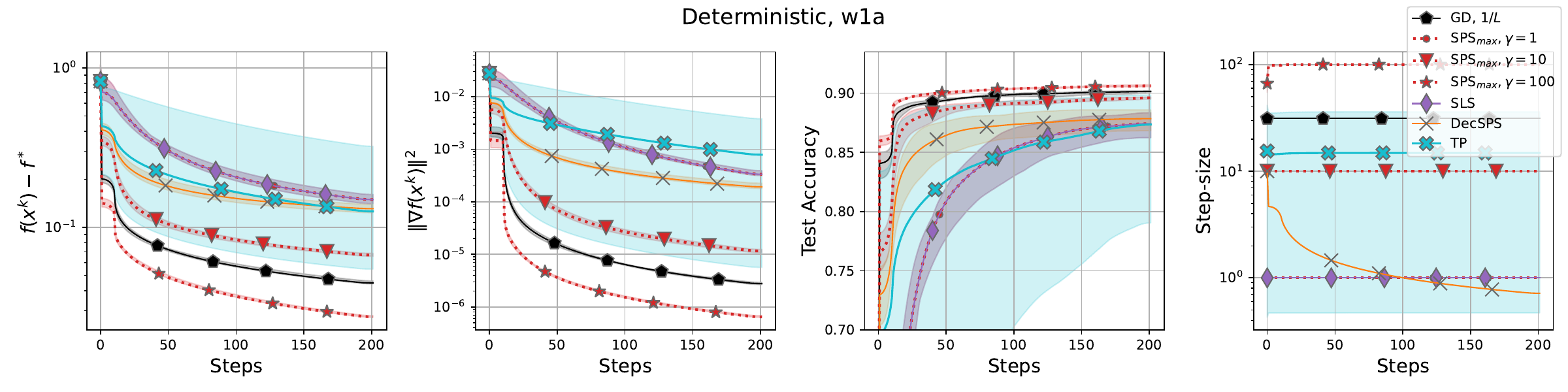}
        % \hfill
        
        \includegraphics[width=\linewidth]{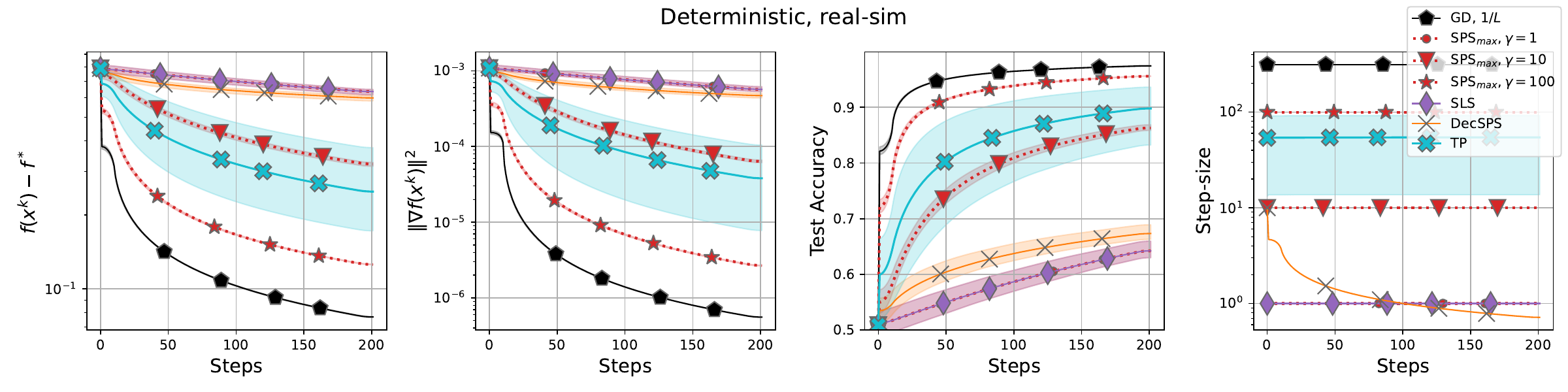}
    \end{subfigure}
    \caption{Performance and stepsize evolution of \tp{} compared to other methods on binary classification problems minimizing \textbf{logistic regression} loss function.}
    \label{exp:determ-logreg-4}
\end{figure*}

\begin{figure*}
    \centering
    \begin{subfigure}{\textwidth}
        \centering
        \includegraphics[width=\linewidth]{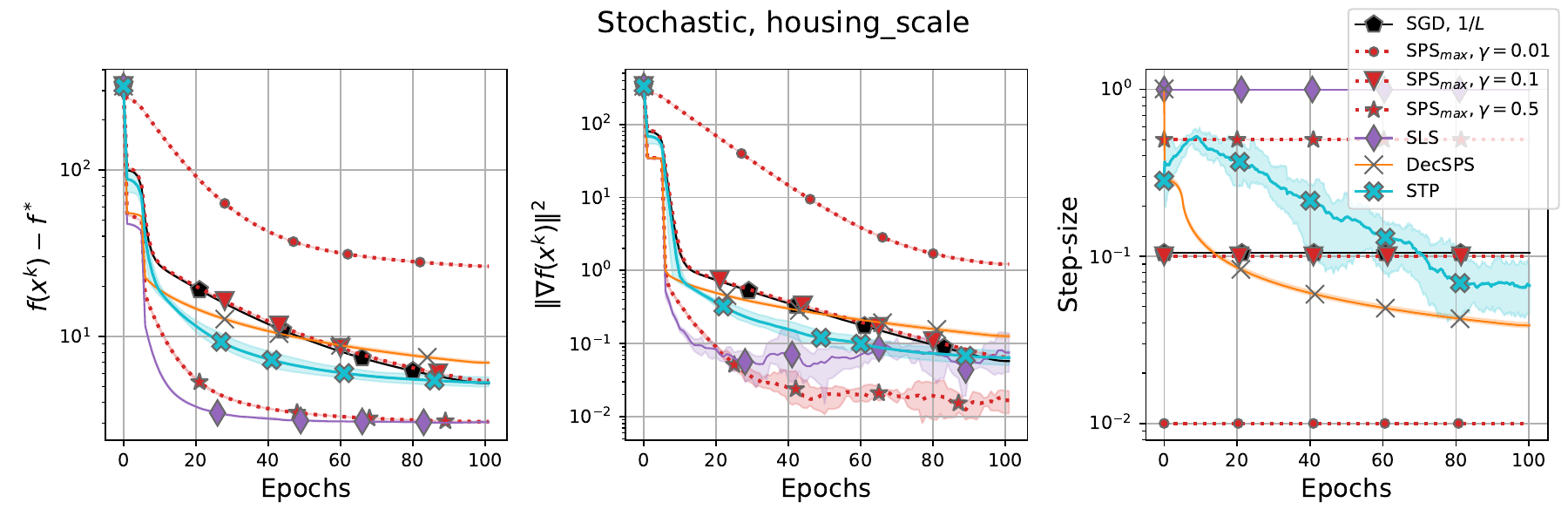}
        % \hfill
        
        \includegraphics[width=\linewidth]{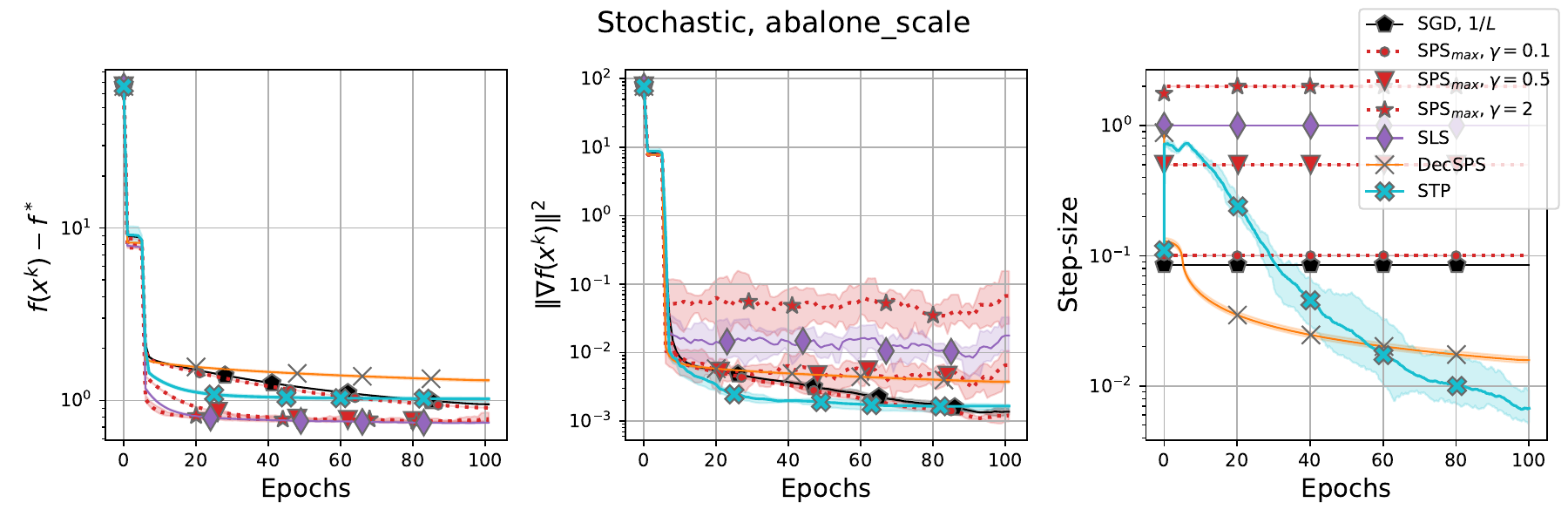}
        
        \includegraphics[width=\linewidth]{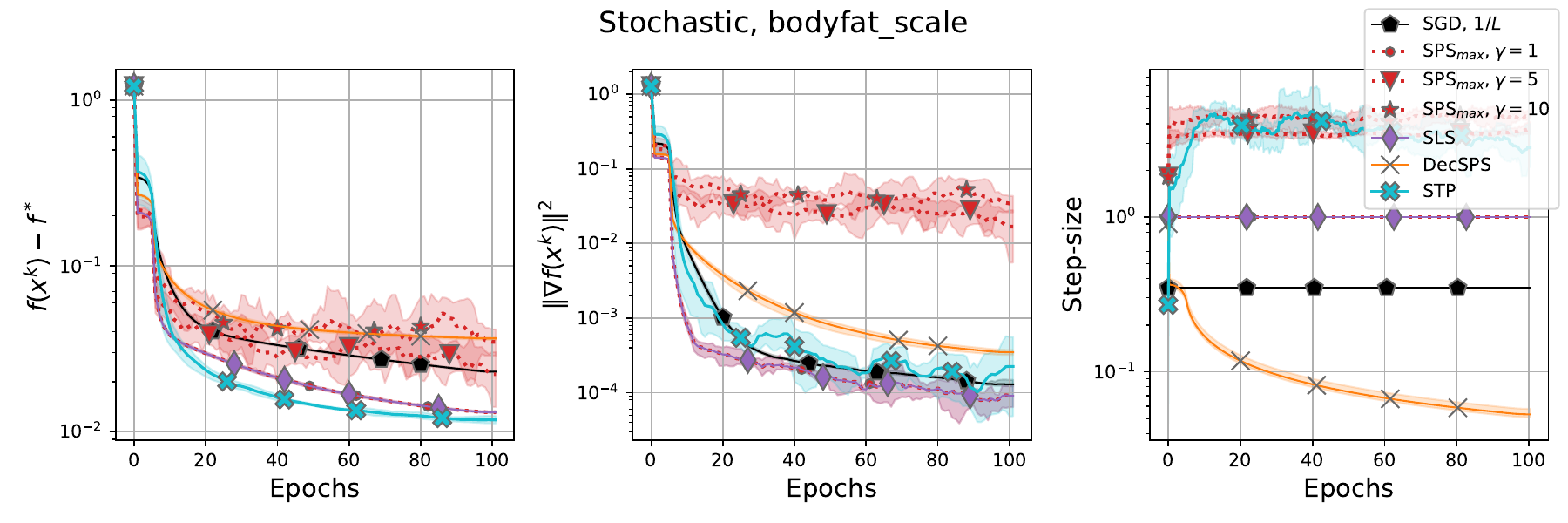}
        
        \includegraphics[width=\linewidth]{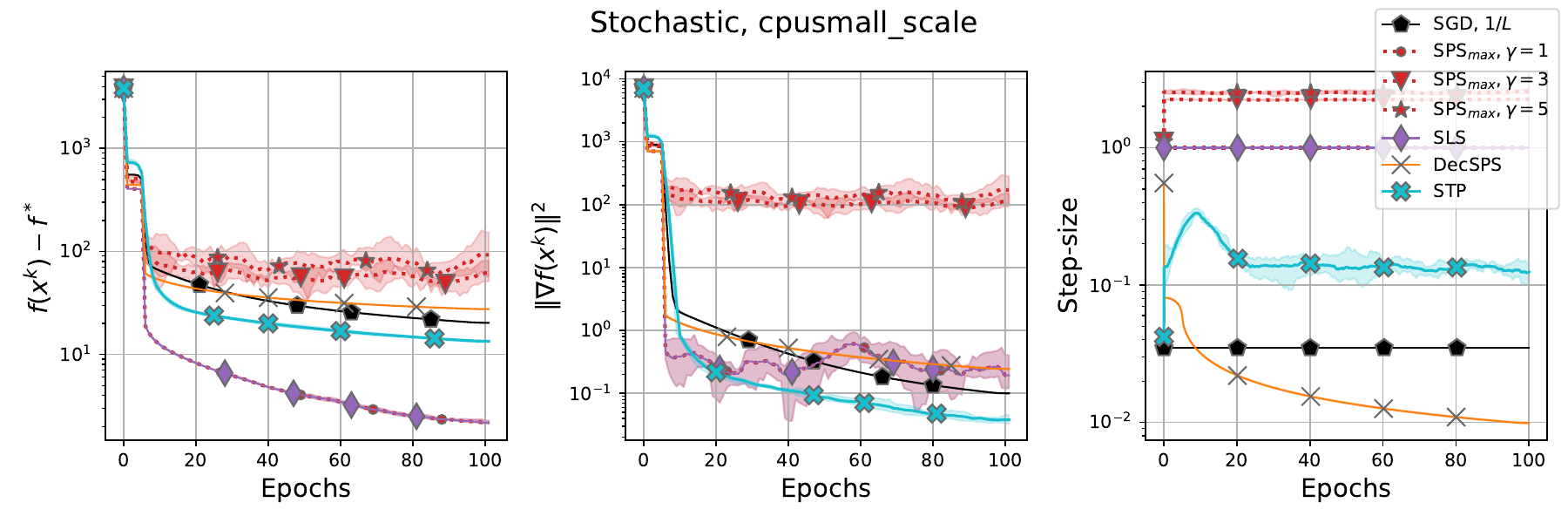}
    \end{subfigure}
    \caption{Performance and stepsize evolution of \stp{} compared to other methods on regression problems minimizing \textbf{least squares} loss function.}
    \label{exp:stoch-lstsq-4}
\end{figure*}

\begin{figure*}
    \centering
    \begin{subfigure}{\textwidth}
        \centering
        \includegraphics[width=\linewidth]{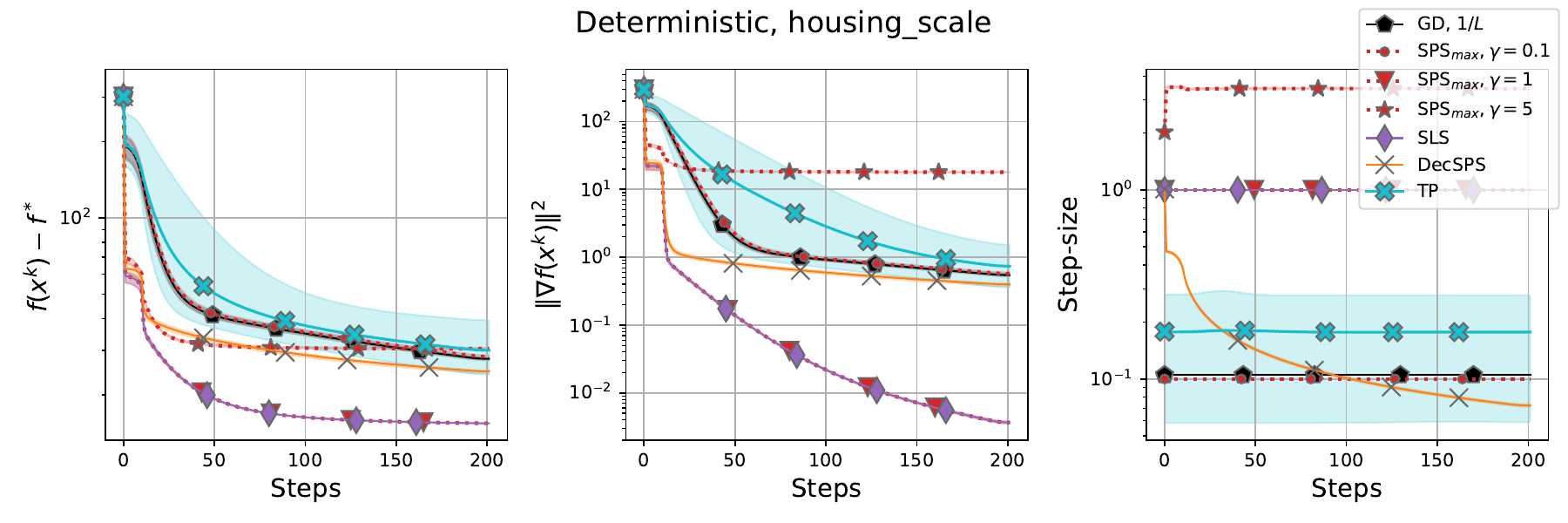}
        % \hfill
        
        \includegraphics[width=\linewidth]{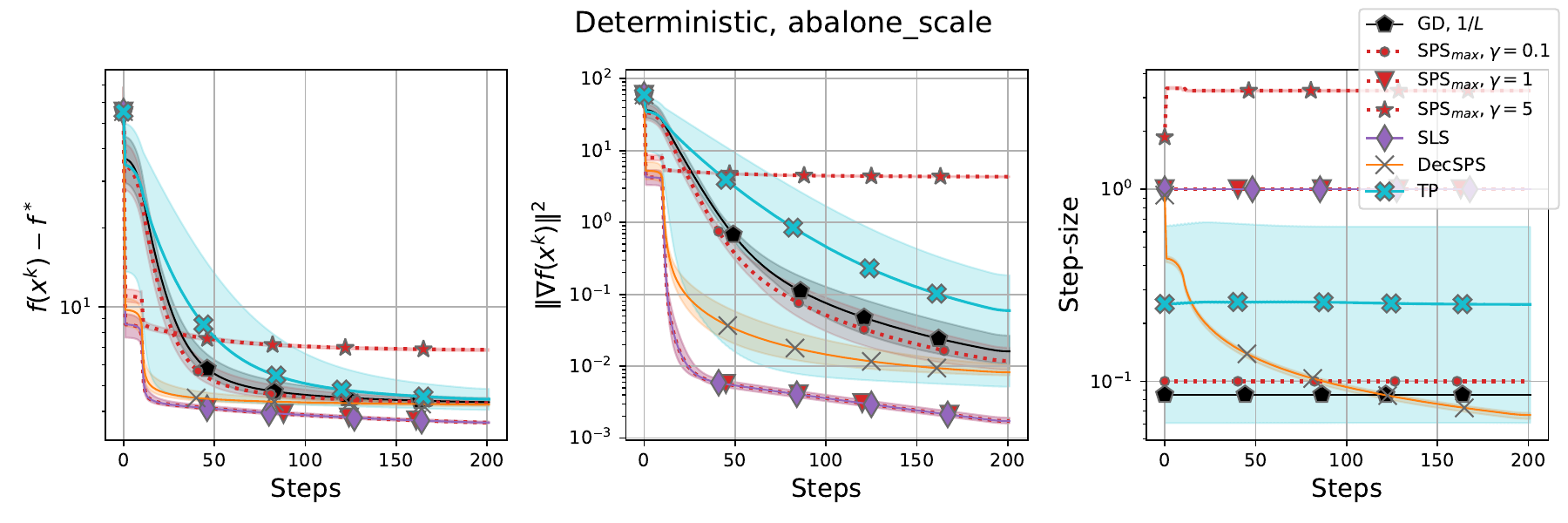}
        
        \includegraphics[width=\linewidth]{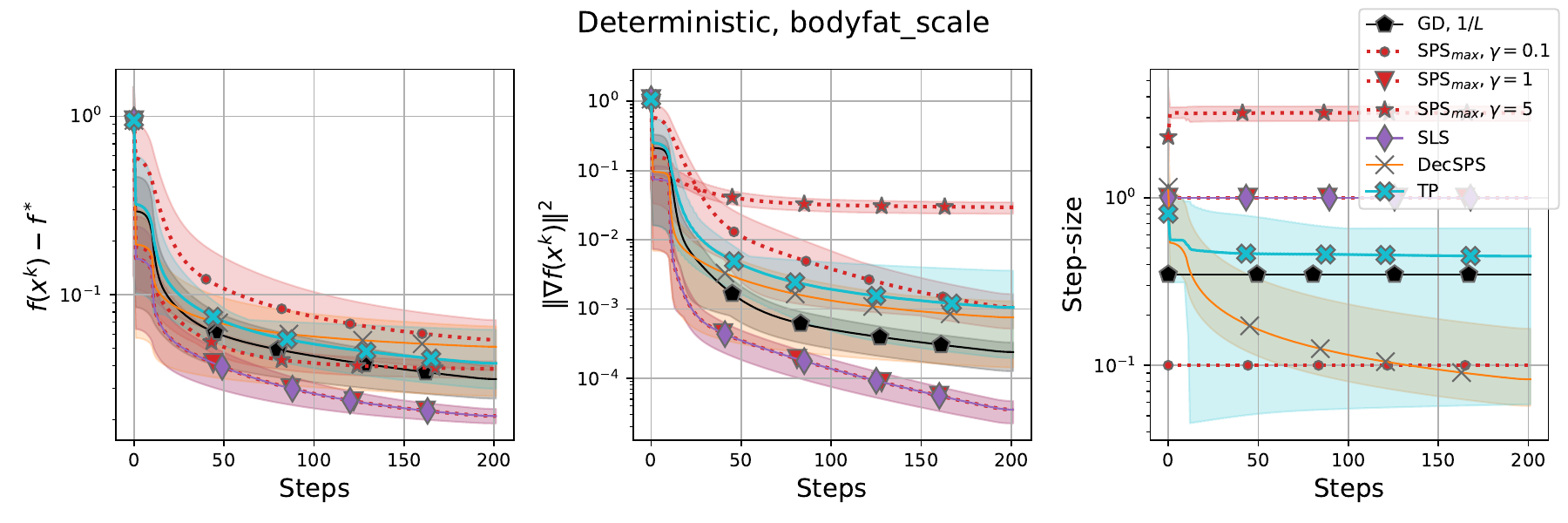}
        
        \includegraphics[width=\linewidth]{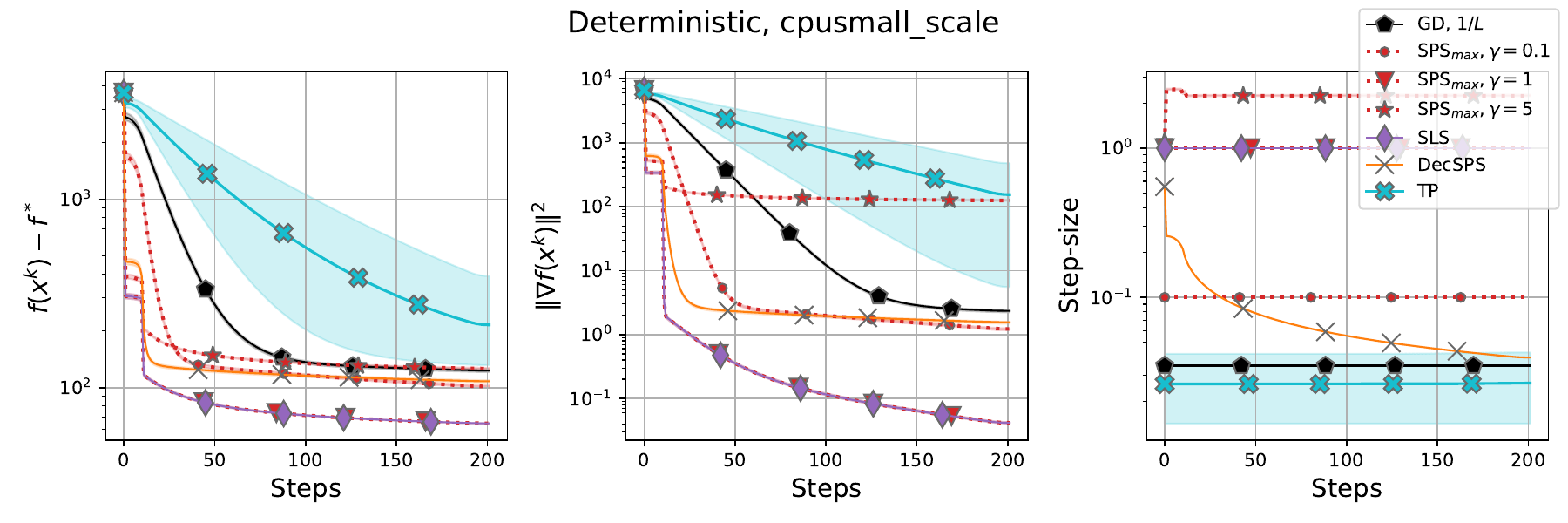}
    \end{subfigure}
    \caption{Performance and stepsize evolution of \tp{} compared to other methods on regression problems minimizing \textbf{least squares} loss function.}
    \label{exp:determ-lstsq-4}
\end{figure*}

\clearpage

\begin{figure*}
    \centering
    \begin{subfigure}{\textwidth}
        \centering
        \includegraphics[width=\linewidth]{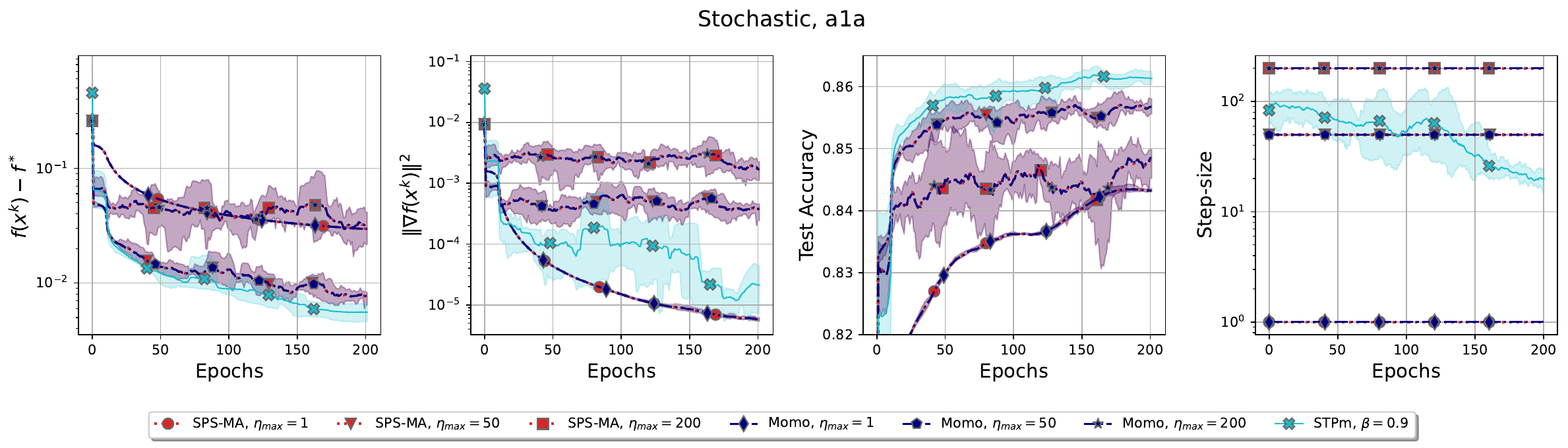}
        
        \includegraphics[width=\linewidth]{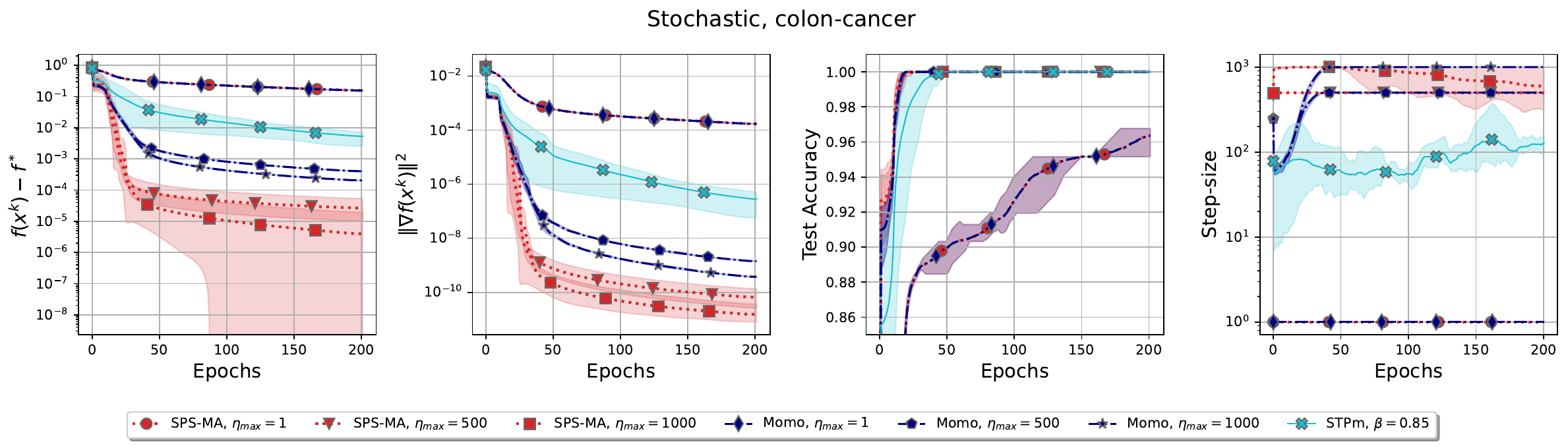}
        
        \includegraphics[width=\linewidth]{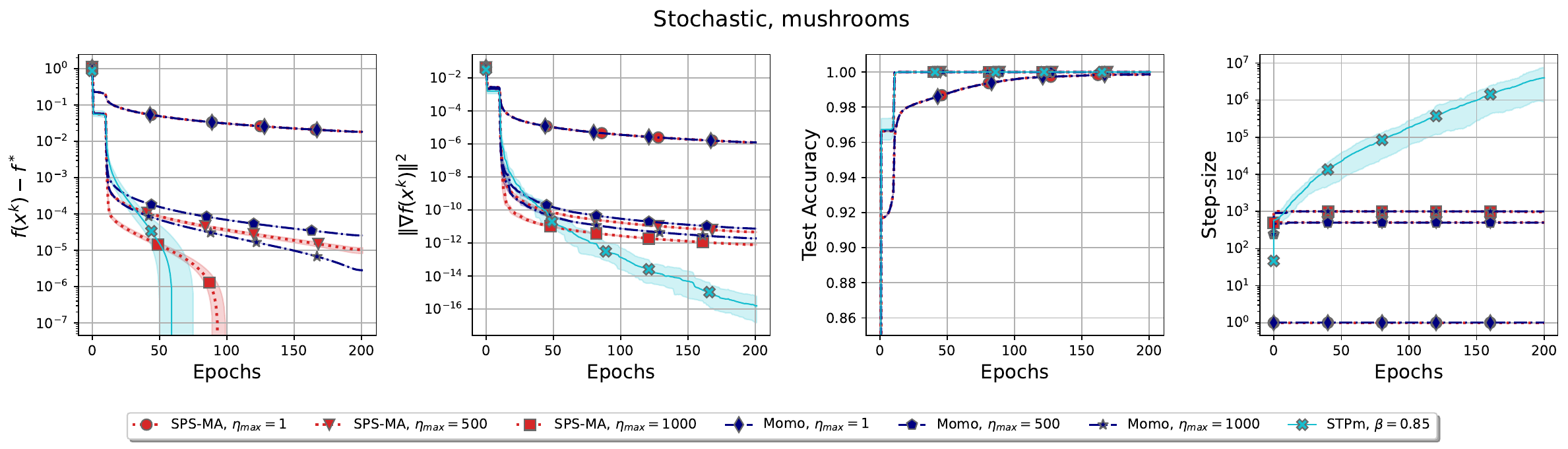}
        
        \includegraphics[width=\linewidth]{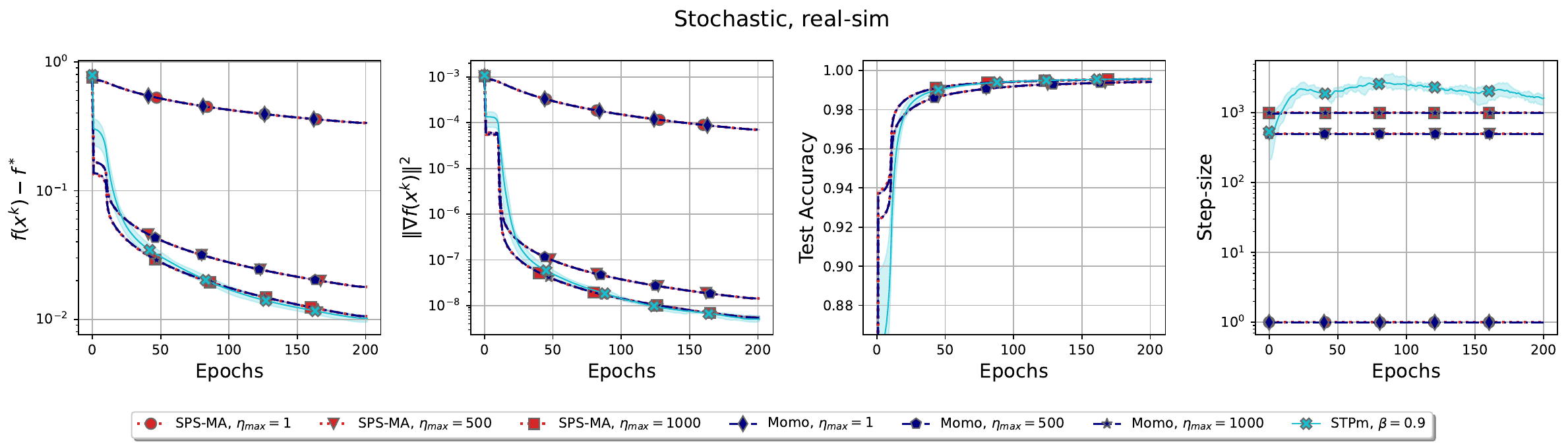}

        \includegraphics[width=\linewidth]{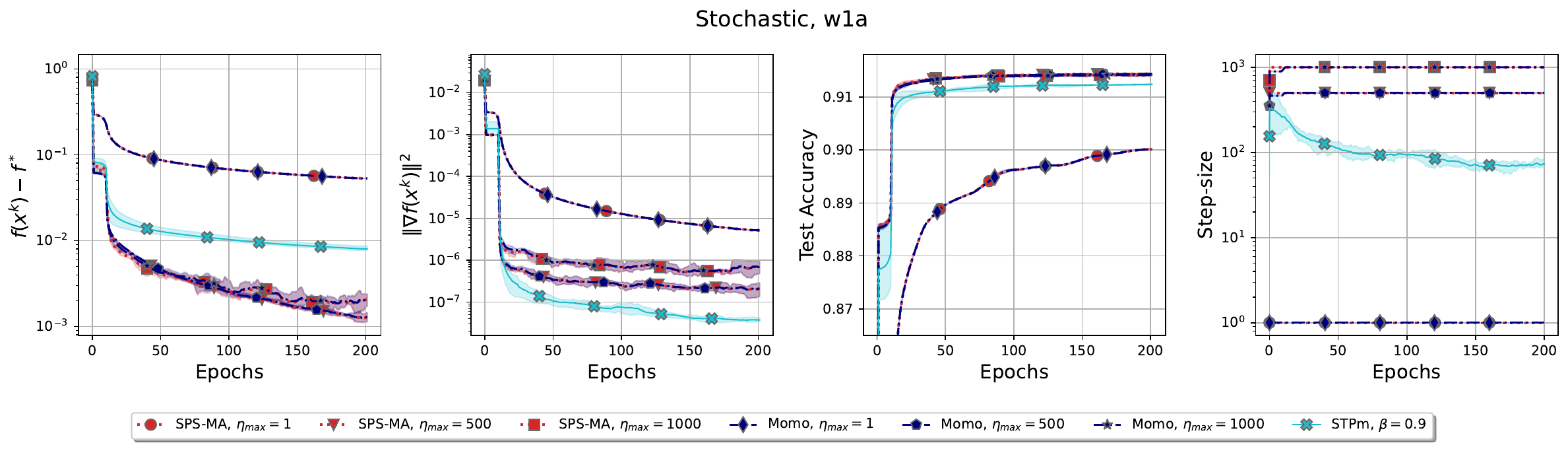}

    \end{subfigure}
    \caption{Performance and stepsize evolution of \stpm{} compared to other methods with momentum on logistic regression problems.}
    \label{exp:stmp}
\end{figure*}

\clearpage
\section{Challenges of the stochastic setup}
\begin{lemma}\label{le:sig}
    Denote differentiable functions $f_i: \rdtor$ for $i \in \lc 1, \dots, n\rc$, their average $f\eqdef \avein in f_i,$ and an arbitrary point $x\in \R^d$.
    For any random variables $s_{i, \zeta}: \mathcal H \to \R$ corresponding to stepsize schedulers, which are based on an independent source of randomness, $\zeta \indep i$, and whole history of optimization algorithm $\mathcal H$\footnote{Informally, history includes all points, functional values, gradients, and all samples of random variables.}.
    % ; $f_1, \dots, f_n, x^j \in \mathcal H, s_{i,\zeta}^j \in \mathcal H$ for previous points $x^j$ and random variables $s_{i,\zeta}^j \in \mathcal H$.     
    If gradient estimator $s_{i,\zeta} \g_i(x)$ is unbiased for any set of functions $f_i$, 
    \begin{align}
        \Edist{i,\zeta}{s_{i,\zeta} \g_i(x)} = \g(x), \label{eq:sig}
    \end{align}
    then necessarily $s_{i,\zeta}$ an $\g_i(x)$ are statistically independent, 
    \begin{equation}
    \Edist {i,\zeta}{s_{i,\zeta} \g_i(x)}= \Edist{i,\zeta}{s_{i, \zeta}}\Edist i{\g_i(x)}.  \label{eq:stat_ind}      
    \end{equation}
\end{lemma}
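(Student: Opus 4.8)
The plan is to collapse both sides of the claimed factorization \eqref{eq:stat_ind} onto the single vector $\g(x)$, using the independence $\zeta \indep i$ together with the fact that the unbiasedness hypothesis \eqref{eq:sig} is required to hold \emph{for every} collection of functions.

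First I would condition on the sampled index $i$. Since $\g_i(x)$ is deterministic once $i$ and $x$ are fixed, and $\zeta$ is drawn independently of $i$, the inner expectation factors as
\[
\Edist{\zeta}{s_{i,\zeta}\g_i(x)} = \Edist{\zeta}{s_{i,\zeta}}\,\g_i(x) \eqdef \bar s_i\, \g_i(x),
\]
where $\bar s_i \eqdef \Edist{\zeta}{s_{i,\zeta}}$ is the conditional mean stepsize. Averaging over the uniformly sampled index gives $\Edist{i,\zeta}{s_{i,\zeta}\g_i(x)} = \avein in \bar s_i\,\g_i(x)$.

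Next I would impose \eqref{eq:sig}, which turns this into $\avein in \bar s_i\,\g_i(x) = \avein in \g_i(x)$, i.e. $\sum_{i=1}^n (\bar s_i - 1)\g_i(x) = \mathbf 0$. Here I would exploit the universality of the hypothesis by probing with tailored test functions: linear functions $f_i(\cdot)=\la g_i,\cdot\ra$ realize an arbitrary gradient pattern $(\g_i(x))_i = (g_i)_i$, and choosing all but one gradient to vanish isolates a single index and forces $\bar s_i = 1$ for each $i$ with $\g_i(x)\neq \mathbf 0$. Equivalently, the all-equal pattern $\g_i(x)\equiv v\neq \mathbf 0$ already delivers $\avein in \bar s_i = 1$, which is the only consequence the factorization actually needs.

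Finally I would reassemble the right-hand side of \eqref{eq:stat_ind}: the previous step gives $\Edist{i,\zeta}{s_{i,\zeta}} = \avein in \bar s_i = 1$, while by definition $\Edist{i}{\g_i(x)} = \avein in \g_i(x) = \g(x)$, so the product equals $\g(x)$, coinciding with the left-hand side by \eqref{eq:sig}. The main obstacle I anticipate is the middle step: a realistic scheduler (for instance a Polyak-type stepsize) may itself depend on the functions, so the conditional means $\bar s_i$ are not a priori fixed constants, and one must argue carefully that it is the quantifier over \emph{all} collections — not a single instance — that pins down $\avein in \bar s_i = 1$. The delicate sub-case is that of zero-gradient indices, which contribute nothing to the left-hand side yet still enter $\avein in \bar s_i$, so these must be treated separately to make the reassembly step rigorous.
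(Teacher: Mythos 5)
Your proposal is correct at the same level of rigor as the paper's own proof, and it follows the same overall strategy: condition on the sampled index $i$, use $\zeta \indep i$ to factor $\Edist{\zeta}{s_{i,\zeta}\g_i(x)} = \Edist{\zeta}{s_{i,\zeta}}\,\g_i(x)$, and then exploit the quantifier ``for any set of functions'' by probing with designed test instances. The difference lies in the probe. The paper takes $d=n$ and coordinate quadratics $f_i(x)=\frac 12 (x^i)^2$, so that the $d\times n$ matrix whose columns are the $\g_i(x)$ is invertible; the vector of conditional means $\Edist{\zeta}{s_{i,\zeta}}$ is then \emph{uniquely} determined by \eqref{eq:sig}, and since the all-ones vector solves the system, every conditional mean equals one in a single shot. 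You instead use linear test functions, either isolating one nonzero gradient per instance (forcing each conditional mean to equal one, one instance at a time) or taking all gradients equal (forcing only the average $\avein in \Edist{\zeta}{s_{i,\zeta}} = 1$). Your variant is more elementary (no matrix inversion), and your observation that the stated conclusion \eqref{eq:stat_ind} only requires $\Edist{i,\zeta}{s_{i,\zeta}}=1$ given \eqref{eq:sig} and $\g(x)\neq \mathbf 0$ is sharper than what the paper exploits. The two caveats you flag are real, but they afflict the paper's argument equally: the conditional means are instance-dependent, so what is pinned down on a probe collection does not automatically transfer to an arbitrary collection (the paper also passes from its quadratic probe to the general claim without comment), and degenerate indices break both constructions (your zero-gradient indices correspond exactly to coordinates with $x^i=0$, where the paper's diagonal matrix ceases to be invertible). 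So neither proof fully closes these gaps; yours matches the paper's proof in substance while being explicit about where the informality lies.
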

\begin{proof}[\pof{\Cref{le:sig}}]
    We are going to show that for a given set of functions $f_i$, the solution is unique.
    For $v\in \R^d$ denote $v^i$ its $i$-th coordinate. \Cref{eq:sig} implies that for any differentiable $f_i$ holds
    \begin{align}
        \frac 1n
        \begin{pmatrix}
            \g_1^1(x) & \dots & \g_n^1(x)\\
            \vdots& \ddots & \vdots\\
            \g_1^d(x) & \dots & \g_n^d(x)\\
        \end{pmatrix}
        \begin{pmatrix}
            \Edist{\zeta}{s_{1,\zeta}}\\
            \vdots\\
            \Edist{\zeta}{s_{n,\zeta}}
        \end{pmatrix}
        =
        \begin{pmatrix}
            \g^1(x)\\
            \vdots\\
            \g^d(x)
        \end{pmatrix}.
    \end{align}
    Now consider functions $f_i$ with $d=n$ such that the matrix
    \begin{align}
        \begin{pmatrix}
            \g_1^1(x) & \dots & \g_n^1(x)\\
            \vdots& \ddots & \vdots\\
            \g_1^d(x) & \dots & \g_n^d(x)\\
        \end{pmatrix}
    \end{align}
    is invertible (e.g., $f_i(x)= \frac 12 (x^i)^2$). We have that $\Edist{\zeta} {s_{i,\zeta}}$ can be \textbf{uniquely} expressed as
    \begin{align}        
        \begin{pmatrix}
            \Edist{\zeta}{s_{1,\zeta}}\\
            \vdots\\
            \Edist{\zeta}{s_{n,\zeta}}
        \end{pmatrix}
        =
        \frac 1n
        \begin{pmatrix}
            \g_1^1(x) & \dots & \g_n^1(x)\\
            \vdots& \ddots & \vdots\\
            \g_1^d(x) & \dots & \g_n^d(x)\\
        \end{pmatrix}^{-1}
        \begin{pmatrix}
            \g^1(x)\\
            \vdots\\
            \g^d(x)
        \end{pmatrix}.
    \end{align}
    To conclude the proof, note that the solution of \eqref{eq:sig} are statistically independent $s_{i,\zeta}$ and $\g_i(x)$ from \eqref{eq:stat_ind} with mean one, $\Edist{i,\zeta}{s_{i,\zeta}} =1$.
\end{proof}
\section{Proofs}

% \subsection{Proof of Lemma \ref{le:convex_converge_together}}
% \begin{proof}[\pof{\Cref{le:convex_converge_together}}]
%     \begin{align}
%         \norms {x^{k+1} - \xopt} 
%         & = \norms{x^k-\xopt} -2\ssize_k\la \xerr k, \g(x^k)\ra + \ssize_k^2 \norms {\g(x^k)},\\
%         \intertext{from convexity}
%         & \leq \norms{x^k-\xopt} -2\ssize_k \ls f(x^k)-\fopt \rs + \ssize_k^2 \norms {\g(x^k)},\\
%         \intertext{and Assumption \ref{as:xydiff}}
%         & \leq \norms{x^k-\xopt} -\frac {2\ssize_k}{\xydiff} \ls f(x^k)-f(y^k) \rs + \ssize_k^2\norms {\g(x^k)},\\
%         \intertext{and using stepsize $\ssize_k=2\frac {f(x^k)-f(y^k)}{\norms{\g(x^k)}}$ we get}
%         & = \norms{x^k-\xopt} - 4\lr \frac {1}{\xydiff} - 1 \rr \frac { \ls f(x^k)-f(y^k) \rs^2}{\norms {\g(x^k)}}.
%     \end{align}
%     Now telescoping the sum and using \Cref{def:gbound} we get
%     \begin{align}
%         \frac {4\lr \frac {1}{\xydiff} - 1 \rr} {G^2}\sumin tk  \ls f(x^t)-f(y^t) \rs^2
%         \leq \norms{x^0-\xopt} - \norms{x^{k+1}-\xopt}
%         \leq \norms{x^0-\xopt},
%     \end{align}
%     therefore
%     \begin{align}
%         \min_{t\in\{1, \dots, k\}}  \ls f(x^t)-f(y^t) \rs^2
%         &\leq \frac {G^2\norms{x^0-\xopt}}{4\lr \frac {1}{\xydiff} - 1 \rr k },\\
%         \min_{t\in\{1, \dots, k\}}  f(x^t)-f(y^t)
%         &\leq \frac {G\norm{x^0-\xopt}}{2 \sqrt{\lr \frac {1}{\xydiff} - 1 \rr k} }.
%     \end{align}
% \end{proof}

\subsection{Proof of Lemma \ref{le:convex_gbound}}
\begin{proof}[\pof{\Cref{le:convex_gbound}}]
    \begin{align}
        \norms {x^{k+1} - \xopt} 
        & = \norms{x^k-\xopt} -2\ssize_k\la \xerr k, \g(x^k)\ra + \ssize_k^2 \norms {\g(x^k)},\\
        \intertext{from convexity}
        & \leq \norms{x^k-\xopt} -2\ssize_k \ls f(x^k)-\fopt \rs + \ssize_k^2 \norms {\g(x^k)},\\
        % \intertext{and Assumption \ref{as:xydiff}}
        % & \leq \norms{x^k-\xopt} -\frac {2\ssize_k}{\xydiff} \ls f(x^k)-f(y^k) \rs + \ssize_k^2\norms {\g(x^k)},\\
        \intertext{and using stepsize $\ssize_k=2\frac {f(x^k)-f(y^k)}{\norms{\g(x^k)}}$ we get}
        & = \norms{x^k-\xopt} + \lr -4\frac {f(x^k)-f(y^k)}{f(x^k)-\fopt} + 4\frac {\ls f(x^k)-f(y^k) \rs^2}{\ls f(x^k)-\fopt \rs^2} \rr \frac { \ls f(x^k)-\fopt \rs^2}{\norms {\g(x^k)}}\\
        & = \norms{x^k-\xopt} - 4\frac {f(x^k)-f(y^k)}{f(x^k)-\fopt} \lr \frac {f(y^k)-\fopt}{ f(x^k)-\fopt } \rr \frac { \ls f(x^k)-\fopt \rs^2}{\norms {\g(x^k)}},\\
        \intertext{using Assumption \ref{as:xydiff},}
        & \leq \norms{x^k-\xopt} - 4\xydiff \yodiff \frac { \ls f(x^k)-\fopt \rs^2}{\norms {\g(x^k)}}.
    \end{align}
    Now telescoping the sum and using \Cref{def:gbound} we get
    \begin{align}
        \frac {4\xydiff\yodiff} {G^2}\sumin tk  \ls f(x^t)-\fopt \rs^2
        \leq \norms{x^0-\xopt} - \norms{x^{k+1}-\xopt}
        \leq \norms{x^0-\xopt},
    \end{align}
    therefore
    \begin{align}
        \min_{t\in\{1, \dots, k\}}  \ls f(x^t)-\fopt \rs^2
        &\leq \frac {G^2\norms{x^0-\xopt}}{4\xydiff\yodiff k },\\
        \min_{t\in\{1, \dots, k\}}  f(x^t)-\fopt
        &\leq \frac {G\norm{x^0-\xopt}}{2 \sqrt{\xydiff\yodiff k} }.
    \end{align}
\end{proof}

\subsection{Proof of Lemma \ref{le:quadratic}}
\begin{proof}[\pof{\Cref{le:quadratic}}]
    With
    \begin{align}
        % f(x)=\frac 12 x^\top x + b,
        \g(x)= x,
        \qquad \normsM{x^k} 2 > \normsM{y^k} 2,
    \end{align}
    we have
    \begin{align}
        x^{k+1}
        &= x^k - \frac {\normsM{x^k}2- \normsM{y^k}2}{\normsM {x^k}2} x^k
        % = x^k \lr 1 - \frac {\normsM{x^k}2- \normsM{y^k}2}{\normsM {x^k}2} \rr
        = x^k \frac {\normsM{y^k}2}{\normsM {x^k}2},
    \end{align}
    and we can conclude
    \begin{align}
        f(x^{k+1})
        &= \frac 12 \normsM {x^{k+1}}2 + b
        = \frac 12 \normsM {x^k}2 \lr \frac {\normsM{y^k}2}{\normsM {x^k}2} \rr^2 + b \nonumber\\
        &< \frac 12 \normsM {y^k}2 + b
        = f(y^{k}).
    \end{align}
\end{proof}

\subsection{Proof of Proposition \ref{pr:strongly_convex}}

\begin{proof}[\pof{\Cref{pr:strongly_convex}}] \label{sec:proof_pr_strongly_convex}
    \begin{align}
        \norms {x^{k+1} - \xopt} 
        & = \norms{x^k-\xopt} -2\ssize_k\la \xerr k, \g(x^k)\ra + \ssize_k^2 \norms {\g(x^k)},\\
        \intertext{from strong convexity}
        & \leq \lr 1-\mu \ssize_k \rr \norms{x^k-\xopt} -2\ssize_k \ls f(x^k)-\fopt \rs + \ssize_k^2 \norms {\g(x^k)},\\
        & = \lr 1-\mu \ssize_k \rr \norms{x^k-\xopt} -\ssize_k \lr 2\ls f(x^k)-\fopt \rs - \ssize_k \norms {\g(x^k)} \rr,\\
        \intertext{and using that stepsize stepsize $\ssize_k \leq 2\frac {f(x^k)-\fopt}{\norms{\g(x^k)}}$ we get}
        & \leq \lr 1-\mu \ssize_k \rr \norms{x^k-\xopt}.
    \end{align}
    Chaining the inequality concludes the proof.
\end{proof}

\end{document}